\documentclass[12pt]{article}
\usepackage[a4paper,margin=1.1in]{geometry}
\usepackage{amssymb}
\usepackage{amsmath}
\usepackage{amsfonts}
\usepackage{amsthm}
\usepackage{color}
\usepackage{float}
\usepackage[all]{xy}
\usepackage{scalerel}
\usepackage{mathabx}
\usepackage{delimset}
\usepackage{setspace}

\newcommand{\qbinom}{\genfrac{[}{]}{0pt}{}}

\def\RR{\mathop{\mbox{\textup{R}}}\nolimits}

\newcommand{\C}{\mathbb{C}}

\newcommand{\N}{\mathbb{N}}

\newcommand{\R}{\mathbb{R}}
\newcommand{\T}{\mathrm{T}}

\newtheorem{theorem}{Theorem}

\begin{document}

\title{\textbf{New $q$-identities Via $q$-Derivative of Basic Hypergeometric Series with Respect to Parameters}}
\author{Ronald Orozco L\'opez}
\newcommand{\Addresses}{{
  \bigskip
  \footnotesize

  \textit{E-mail address}, R.~Orozco: \texttt{rj.orozco@uniandes.edu.co}
  
}}

\maketitle

\begin{abstract}
In this paper, we use the effect of the $q$-differential and deformed $q$-exponential operators on basic hypergeometric series to find new $q$-identities from the $q$-Gauss sum, the $q$-Chu-Vandermonde's sum, and Jackson's transformation formula.
\end{abstract}
\noindent 2020 {\it Mathematics Subject Classification}:
Primary 05A30. Secondary 33D15.

\noindent \emph{Keywords: } Deformed basic hypergeometric series, $q$-Gauss sum, $q$-Chu-Vandermonde's sum, deformed $q$-exponential operator, $q$-differential operator.

\section{Introduction and Motivation}

Throughout this paper, we take $0<\vert q\vert<1$. The $q$-differential operator $D_{q}$ is defined by:
\begin{equation*}
    D_{q}f(x)=\frac{f(x)-f(qx)}{x}
\end{equation*}
and the Leibniz rule for $D_{q}$
\begin{equation}\label{eqn_leibniz}
    D_{q}^{n}\{f(x)g(x)\}=\sum_{k=0}^{n}q^{k(k-n)}\qbinom{n}{k}_{q}D_{q}^{k}\{f(x)\}D_{q}^{n-k}\{g(q^{k}x)\},
\end{equation}
where the $q$-binomial coefficient is defined by
\begin{equation*}
\qbinom{n}{k}_{q}=\frac{(q;q)_{n}}{(q;q)_{k}(q;q)_{n-k}},
\end{equation*}
and the $q$-shifted factorial of $a$ is given by $(a;q)_{0}=1$ and
\begin{align*}
    (a;q)_{n}&=\prod_{k=0}^{n-1}(1-q^{k}a),\text{ for }n=1,2,\ldots,\\
    (a;q)_{\infty}&=\lim_{n\rightarrow\infty}(a;q)_{n}=\prod_{k=0}^{\infty}(1-aq^{k}).
\end{align*}
Following Gasper and Rahman \cite{gasper}, the ${}_r\phi_{s}$ basic hypergeometric series is defined by
\begin{equation*}
    {}_r\phi_{s}\left(
    \begin{array}{c}
         a_{1},a_{2},\ldots,a_{r} \\
         b_{1},\ldots,b_{s}
    \end{array}
    ;q,z
    \right)=\sum_{n=0}^{\infty}\frac{(a_{1},a_{2},\ldots,a_{r};q)_{n}}{(q;q)_{n}(b_{1},b_{2},\ldots,b_{s};q)_{n}}\Big[(-1)^{n}q^{\binom{n}{2}}\Big]^{1+s-r}z^n
\end{equation*}
where
\begin{align*}
    (a_{1},a_{2},\ldots,a_{m};q)_{n}&=(a_{1};q)_{n}(a_{2};q)_{n}\cdots(a_{m};q)_{n}.
\end{align*}
Equivalently, 
\begin{equation*}
    (a_{1},a_{2},\ldots,a_{m};q)_{\infty}=(a_{1};q)_{\infty}(a_{2};q)_{\infty}\cdots(a_{m};q)_{\infty}.
\end{equation*}
In \cite{orozco}, we define the deformed basic hypergeometric series ${}_{r}\Phi_{s}$ as
    \begin{equation}\label{eqn_def_hyp}
        {}_{r}\Phi_{s}\left(
    \begin{array}{c}
         a_{1},a_{2},\ldots,a_{r} \\
         b_{1},\ldots,b_{s}
    \end{array}
    ;q,u,z
    \right)=\sum_{n=0}^{\infty}u^{\binom{n}{2}}\frac{(a_{1},a_{2},\ldots,a_{r};q)_{n}}{(q,b_{1},b_{2},\ldots,b_{s};q)_{n}}[(-1)^nq^{\binom{n}{2}}]^{1+s-r}z^n.                
    \end{equation}
If $u=1$, then ${}_{r}\Phi_{s}={}_{r}\phi_{s}$. If $u=q$, 
\begin{equation*}
    {}_{r+1}\Phi_{r}\left(
    \begin{array}{c}
         a_{1},a_{2},\ldots,a_{r},0 \\
         b_{1},\ldots,b_{r}
    \end{array}
    ;q,q,z
    \right)={}_{r}\phi_{r}\left(
    \begin{array}{c}
         a_{1},a_{2},\ldots,a_{r} \\
         b_{1},b_{2},\ldots,b_{r}
    \end{array}
    ;q,-z
    \right).
\end{equation*}

In this paper, we will frequently use the following identities:
\begin{align}
    (a;q)_{n}&=\frac{(a;q)_{\infty}}{(aq^n;q)_{\infty}},\label{eqn_iden1}\\
    (a;q)_{n+k}&=(a;q)_{n}(aq^{n};q)_{k},\label{eqn_iden2}\\
    (aq^{k};q)_{n-k}&=\frac{(a;q)_{k}}{(a;q)_{i}},\label{eqn_iden3}\\
    (aq^n;q)_{k}&=\frac{(a;q)_{k}(aq^k;q)_{n}}{(a;q)_{n}}.\label{eqn_iden6}
\end{align}
In addition, we will use the identities for binomial coefficients:
\begin{align*}
    \binom{n+k}{2}&=\binom{n}{2}+\binom{k}{2}+nk,\\
    \binom{n-k}{2}&=\binom{n}{2}+\binom{k}{2}+k(1-n).
\end{align*}
This paper provides new $q$-identities by applying the $q$-differential and deformed $q$-exponential operators to the $q$-Gauss sum, the $q$-Chu-Vandermonde's sum, and Jackson's transformation formula. These q-identities have the following quotient forms of the q-shifted factorials
\begin{equation*}
    \frac{1}{(c;q)_{n}},\ \ \frac{x^n}{(c;q)_{n}},\ \ \frac{(ac;q)_{n}}{(bc;q)_{n}},\text{ and }\frac{(ac,bc;q)_{n}}{(dc,ec;q)_{n}}.
\end{equation*}
The method will consist of applying the operators $D_{q}$ and $\T(yD_{q}|u)$, with respect to the parameter $c$, to both sides of the above $q$-identities and then using the identities Eqs.(\ref{eqn_iden1})-(\ref{eqn_iden6}). Something similar was done in \cite{ghany}.

\section{Some identities of the $q$-differential operator}

For all $n\in\N$, we have the identities:
\begin{align}\label{eqn_iden5}
    D_{q}^kx^{n}=\begin{cases}
    \frac{(q;q)_{n}}{(q;q)_{n-k}}x^{n-k},&\text{ if } k\leq n;\\
    0,&\text{ if }k>n.
    \end{cases}
\end{align}

\begin{align}\label{eqn_iden4}
    D_{q}^{k}(ax;q)_{n}=
    \begin{cases}
        (-a)^kq^{\binom{k}{2}}\frac{(q;q)_{n}}{(q;q)_{n-k}}\frac{(ax;q)_{n}}{(ax;q)_{k}},&\text{ if }k\leq n;\\
        0,&\text{ if }k>n.
    \end{cases}
\end{align}

\begin{theorem}\label{theo_nder_geo}
For all $n\geq1$,
    \begin{equation}\label{eqn_nder_geo}
        D_{q}^{n}\left(\frac{1}{1-ax}\right)=\frac{(q;q)_{n}a^n}{(ax;q)_{n+1}}.
    \end{equation}
\end{theorem}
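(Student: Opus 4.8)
The plan is to proceed by induction on $n$. The case $n=1$ is a direct computation from the definition of $D_{q}$: combining $\frac{1}{1-ax}$ and $\frac{1}{1-aqx}$ over the common denominator $(1-ax)(1-aqx)=(ax;q)_{2}$, the numerator becomes $(1-aqx)-(1-ax)=ax(1-q)$, so $D_{q}\bigl(\frac{1}{1-ax}\bigr)=\frac{a(1-q)}{(ax;q)_{2}}=\frac{(q;q)_{1}a}{(ax;q)_{2}}$, as required. (The statement for $n=0$ is the tautology $\frac{1}{1-ax}=\frac{1}{(ax;q)_{1}}$, so one may equally start the induction there.)

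For the inductive step, assume Eq.~\eqref{eqn_nder_geo} holds for some $n\geq1$. Since the prefactor $(q;q)_{n}a^{n}$ does not depend on $x$, applying $D_{q}$ to both sides reduces everything to evaluating $D_{q}\bigl(\frac{1}{(ax;q)_{n+1}}\bigr)$. Here I would use the factorizations $(ax;q)_{n+1}=(1-ax)(aqx;q)_{n}$ and $(aqx;q)_{n+1}=(aqx;q)_{n}(1-aq^{n+1}x)$, both special cases of Eq.~\eqref{eqn_iden2}. Then $\frac{1}{(ax;q)_{n+1}}-\frac{1}{(aqx;q)_{n+1}}$ has common denominator $(1-ax)(aqx;q)_{n}(1-aq^{n+1}x)=(ax;q)_{n+2}$, while the numerator collapses to $(1-aq^{n+1}x)-(1-ax)=ax(1-q^{n+1})$. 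Dividing by $x$ gives $D_{q}\bigl(\frac{1}{(ax;q)_{n+1}}\bigr)=\frac{a(1-q^{n+1})}{(ax;q)_{n+2}}$; multiplying by $(q;q)_{n}a^{n}$ and using $(q;q)_{n}(1-q^{n+1})=(q;q)_{n+1}$ yields the formula with $n$ replaced by $n+1$, closing the induction.

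Two alternatives are worth recording. One can expand $\frac{1}{1-ax}=\sum_{k\geq0}(ax)^{k}$, apply Eq.~\eqref{eqn_iden5} termwise to obtain $D_{q}^{n}\bigl(\frac{1}{1-ax}\bigr)=a^{n}\sum_{j\geq0}\frac{(q;q)_{n+j}}{(q;q)_{j}}(ax)^{j}=(q;q)_{n}a^{n}\sum_{j\geq0}\qbinom{n+j}{n}_{q}(ax)^{j}$, and then invoke the $q$-binomial series $\sum_{j\geq0}\qbinom{n+j}{n}_{q}t^{j}=\frac{1}{(t;q)_{n+1}}$. Alternatively, applying the Leibniz rule Eq.~\eqref{eqn_leibniz} to the product $\frac{1}{1-ax}\cdot(1-ax)=1$ kills all but the $k=n$ and $k=n-1$ terms, since $D_{q}^{m}(1-aq^{k}x)=0$ for $m\geq2$, which produces the recursion $D_{q}^{n}\bigl(\frac{1}{1-ax}\bigr)=\frac{(1-q^{n})a}{1-aq^{n}x}\,D_{q}^{n-1}\bigl(\frac{1}{1-ax}\bigr)$, again giving the result.

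None of these steps is genuinely hard; the only point that demands care is the bookkeeping with $q$-shifted factorials in the inductive step — matching $(1-ax)(aqx;q)_{n}(1-aq^{n+1}x)$ against $(ax;q)_{n+2}$ and simplifying the numerator — where an index slip is the main risk.
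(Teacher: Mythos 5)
Your main argument is correct and is essentially the paper's proof: the same induction on $n$, with the identical base case, and an inductive step that computes $D_{q}\bigl(\tfrac{1}{(ax;q)_{n+1}}\bigr)=\tfrac{a(1-q^{n+1})}{(ax;q)_{n+2}}$ — you do this by combining fractions directly, while the paper reaches the same quantity via $D_{q}(ax;q)_{n+1}=-(1-q^{n+1})a(aqx;q)_{n}$ and the quotient form of $D_q$, a purely cosmetic difference. The two alternative routes you sketch (termwise differentiation of the geometric series plus the $q$-binomial series, and the Leibniz-rule recursion from $\tfrac{1}{1-ax}\cdot(1-ax)=1$) are also sound, but they are asides rather than the proof compared here.
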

\begin{proof}
Note that
\begin{equation}
    D_{q}\left(\frac{1}{1-ax}\right)=\frac{(1-q)a}{(1-ax)(1-aqx)}=\frac{(q;q)_{1}a}{(ax;q)_{2}}.
\end{equation}
Suppose that Eq.(\ref{eqn_nder_geo}) is true for $n$ and let us prove by induction for $n+1$. As
\begin{equation}
    D_{q}(ax;q)_{n+1}=-(1-q^{n+1})a(aqx;q)_{n},
\end{equation}
then
\begin{align*}
    D_{q}^{n+1}\left(\frac{1}{1-ax}\right)&=D_{q}D_{q}^{n}\left(\frac{1}{1-ax}\right)\\
    &=D_{q}\left(\frac{(q;q)_{n}a^n}{(ax;q)_{n+1}}\right)\\
    &=\frac{-(q;q)_{n}a^nD_{q}(ax;q)_{n+1}}{(ax;q)_{n+1}(aqx;q)_{n+1}}\\
    &=\frac{(q;q)_{n+1}(aqx;q)_{n}a^{n+1}}{(ax;q)_{n+1}(aqx;q)_{n+1}}\\
    &=\frac{(q;q)_{n+1}a^{n+1}}{(1-q^{n+1}xa)(ax;q)_{n+1}}\\
    &=\frac{(q;q)_{n+1}a^{n+1}}{(ax;q)_{n+2}}.
\end{align*}
The proof is reached.
\end{proof}

\begin{theorem}\label{theo_der_poch_reci}
    \begin{equation*}
        D_{q}^{m}\left\{\frac{1}{(ax;q)_{n}}\right\}=\frac{(q^n;q)_{m}a^{m}}{(ax;q)_{m+n}}.
    \end{equation*}
\end{theorem}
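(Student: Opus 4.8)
The plan is to prove the identity by induction on $m$, after first establishing the single-step case for an arbitrary lower index. Concretely, the first step is to show that for every $n\geq 0$,
\begin{equation*}
D_{q}\left\{\frac{1}{(ax;q)_{n}}\right\}=\frac{a(1-q^{n})}{(ax;q)_{n+1}}.
\end{equation*}
To obtain this I would expand $D_{q}$ by its definition, use the factorizations $(ax;q)_{n}=(1-ax)(aqx;q)_{n-1}$ and $(aqx;q)_{n}=(aqx;q)_{n-1}(1-aq^{n}x)$, put the two resulting fractions over the common denominator $(1-ax)(aqx;q)_{n-1}(1-aq^{n}x)=(ax;q)_{n+1}$, and simplify the numerator difference $(1-aq^{n}x)-(1-ax)=ax(1-q^{n})$; the prefactor $x^{-1}$ coming from $D_{q}$ then cancels the $x$. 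For $n=0$ both sides vanish, so no restriction on $n$ is needed, and for $n=1$ this is precisely the first displayed line in the proof of Theorem~\ref{theo_nder_geo}.

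Second, I would run the induction on $m$. The base case $m=0$ is immediate since $(q^{n};q)_{0}=1$. For the inductive step, assuming the formula holds for $m$, I would write
\begin{equation*}
D_{q}^{m+1}\left\{\frac{1}{(ax;q)_{n}}\right\}=D_{q}\left(\frac{(q^{n};q)_{m}a^{m}}{(ax;q)_{m+n}}\right),
\end{equation*}
pull the constant $(q^{n};q)_{m}a^{m}$ out of $D_{q}$, and apply the single-step identity with $n$ replaced by $m+n$. This produces the extra factor $a(1-q^{m+n})$, and the elementary telescoping relation $(q^{n};q)_{m}(1-q^{n+m})=(q^{n};q)_{m+1}$ (immediate from the definition of the $q$-shifted factorial, or from Eq.~\eqref{eqn_iden2} with $a=q^{n}$, $k=1$) yields exactly $(q^{n};q)_{m+1}a^{m+1}/(ax;q)_{m+n+1}$, which is the statement for $m+1$.

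There is no real obstacle here; the only point needing a little care is the algebraic simplification in the single-step case, namely that the common denominator collapses to $(ax;q)_{n+1}$ and that the numerator difference is $ax(1-q^{n})$. An alternative route would be to write $1/(ax;q)_{n}=\prod_{j=0}^{n-1}1/(1-aq^{j}x)$ and combine the $q$-Leibniz rule \eqref{eqn_leibniz} with Theorem~\ref{theo_nder_geo}, but that forces one to resum a multi-index sum of $q$-binomial coefficients, which is noticeably heavier than the two-line induction above. As a consistency check, the case $n=1$ recovers Theorem~\ref{theo_nder_geo}.
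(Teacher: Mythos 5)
Your proof is correct, but it follows a different route from the paper. The paper does not induct on $m$ at all: it first notes (from Theorem~\ref{theo_nder_geo}) that $\frac{1}{(ax;q)_{n}}=\frac{1}{a^{n-1}(q;q)_{n-1}}D_{q}^{n-1}\left\{\frac{1}{1-ax}\right\}$, so that $D_{q}^{m}\left\{\frac{1}{(ax;q)_{n}}\right\}=\frac{1}{a^{n-1}(q;q)_{n-1}}D_{q}^{m+n-1}\left\{\frac{1}{1-ax}\right\}$, applies Theorem~\ref{theo_nder_geo} once more with exponent $m+n-1$, and finishes with the simplification $\frac{(q;q)_{m+n-1}}{(q;q)_{n-1}}=(q^{n};q)_{m}$; this is a three-line reduction that recycles the inductive work already done for Theorem~\ref{theo_nder_geo} (and tacitly assumes $n\geq1$). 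You instead establish the single-step formula $D_{q}\left\{\frac{1}{(ax;q)_{n}}\right\}=\frac{a(1-q^{n})}{(ax;q)_{n+1}}$ by direct computation and then induct on $m$ using $(q^{n};q)_{m}(1-q^{n+m})=(q^{n};q)_{m+1}$, which is self-contained, handles $n=0$ explicitly, and in effect re-proves the content of Theorem~\ref{theo_nder_geo} rather than invoking it; the paper's argument buys brevity by exploiting the identity "a $q$-shifted factorial reciprocal is itself an iterated $q$-derivative of $\frac{1}{1-ax}$," while yours buys independence from that observation at the cost of repeating an induction. Both are sound, and your remark that the $n=1$ case recovers Theorem~\ref{theo_nder_geo} is a fair consistency check.
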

\begin{proof}
    \begin{align*}
        D_{q}^{m}\left\{\frac{1}{(ax;q)_{n}}\right\}&=\frac{1}{a^{n-1}(q;q)_{n-1}}D_{q}^{m+n-1}\left\{\frac{1}{1-ax}\right\}\\
        &=\frac{1}{a^{n-1}(q;q)_{n-1}}\frac{(q;q)_{m+n-1}a^{m+n-1}}{(ax;q)_{m+n}}\\
        &=\frac{(q^n;q)_{m}a^{m}}{(ax;q)_{m+n}}.
    \end{align*}
\end{proof}

\begin{theorem}
    \begin{equation*}
        D_{q}^{k}\left\{\frac{x^n}{(ax;q)_{n}}\right\}=\frac{(q;q)_{n}}{(ax;q)_{n}}\sum_{i=0}^{k}\qbinom{k}{i}_{q}\frac{(q^n;q)_{k-i}(ax;q)_{i}a^{k-i}}{(q;q)_{n-i}(aq^nx;q)_{k}}x^{n-i}.
    \end{equation*}
\end{theorem}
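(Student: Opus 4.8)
The plan is to write $\dfrac{x^n}{(ax;q)_n}=x^n\cdot\dfrac{1}{(ax;q)_n}$ and expand $D_q^k$ of this product by the $q$-Leibniz rule \eqref{eqn_leibniz}, taking $f(x)=x^n$ and $g(x)=1/(ax;q)_n$. This gives
\begin{equation*}
D_q^k\left\{\frac{x^n}{(ax;q)_n}\right\}=\sum_{i=0}^k q^{i(i-k)}\qbinom{k}{i}_q D_q^i\{x^n\}\,D_q^{k-i}\{g(q^i x)\}.
\end{equation*}
The first factor is handled at once by \eqref{eqn_iden5}: $D_q^i\{x^n\}=\frac{(q;q)_n}{(q;q)_{n-i}}x^{n-i}$, with the convention $1/(q;q)_{n-i}=0$ for $i>n$, so the terms with $i>n$ drop out automatically and running the sum up to $k$ is harmless.

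For the second factor, the first step is to record the dilation rule $D_q\{h(cx)\}=c\,(D_qh)(cx)$, hence $D_q^m\{h(cx)\}=c^m\,(D_q^mh)(cx)$; applied with $c=q^i$, $m=k-i$, $h=g$ this gives $D_q^{k-i}\{g(q^i x)\}=q^{i(k-i)}(D_q^{k-i}g)(q^i x)$. Next, Theorem \ref{theo_der_poch_reci} evaluates $(D_q^{k-i}g)(y)=\frac{(q^n;q)_{k-i}a^{k-i}}{(ay;q)_{k-i+n}}$, so at $y=q^i x$ one obtains $D_q^{k-i}\{g(q^i x)\}=q^{i(k-i)}\frac{(q^n;q)_{k-i}a^{k-i}}{(aq^i x;q)_{n+k-i}}$. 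Multiplying the two factors, the prefactors $q^{i(i-k)}$ and $q^{i(k-i)}$ cancel, leaving
\begin{equation*}
D_q^k\left\{\frac{x^n}{(ax;q)_n}\right\}=(q;q)_n\sum_{i=0}^k\qbinom{k}{i}_q\frac{(q^n;q)_{k-i}a^{k-i}}{(q;q)_{n-i}(aq^i x;q)_{n+k-i}}x^{n-i}.
\end{equation*}

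The remaining work, and the only place that needs care, is to rewrite $(aq^i x;q)_{n+k-i}$ in terms of the $q$-shifted factorials appearing in the claim. Applying \eqref{eqn_iden2} twice — once as $(ax;q)_{n+k}=(ax;q)_i(aq^i x;q)_{n+k-i}$ and once as $(ax;q)_{n+k}=(ax;q)_n(aq^n x;q)_k$ — gives $(aq^i x;q)_{n+k-i}=\dfrac{(ax;q)_n(aq^n x;q)_k}{(ax;q)_i}$, so $\dfrac{1}{(aq^i x;q)_{n+k-i}}=\dfrac{(ax;q)_i}{(ax;q)_n(aq^n x;q)_k}$. Substituting this and pulling the factor $\frac{(q;q)_n}{(ax;q)_n}$ out of the sum produces exactly the right-hand side of the theorem. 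I expect the main obstacle to be purely bookkeeping: keeping the two $q$-power factors straight across the chain-rule step so that they genuinely cancel, and lining up the index shifts in \eqref{eqn_iden2} correctly; there is no deeper difficulty, since everything reduces to the already-proven Theorem \ref{theo_der_poch_reci} together with the elementary dilation identity for $D_q$.
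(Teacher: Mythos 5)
Your proof is correct and follows essentially the same route as the paper: the $q$-Leibniz rule \eqref{eqn_leibniz} with $f(x)=x^n$, $g(x)=1/(ax;q)_n$, then \eqref{eqn_iden5} and Theorem \ref{theo_der_poch_reci}, and finally \eqref{eqn_iden2} to rewrite $(aq^ix;q)_{n+k-i}$ as $(ax;q)_n(aq^nx;q)_k/(ax;q)_i$. The only difference is that you make explicit the dilation step $D_q^m\{g(q^ix)\}=q^{im}(D_q^mg)(q^ix)$ and the final Pochhammer manipulation, which the paper leaves implicit.
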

\begin{proof}
    \begin{align*}
        D_{q}^{k}\left\{\frac{x^n}{(ax;q)_{n}}\right\}&=\sum_{i=0}^{k}q^{i(i-k)}\qbinom{k}{i}_{q}D_{q}^{i}\left\{x^n\right\}D_{q}^{k-i}\left\{\frac{1}{(aq^ix;q)_{n}}\right\}\\
        &=\sum_{i=0}^{k}q^{i(i-k)}\qbinom{k}{i}_{q}\frac{(q;q)_{n}}{(q;q)_{n-i}}x^{n-i}\frac{(q^n;q)_{k-i}(aq^i)^{k-i}}{(aq^ix;q)_{n+k-i}}\\
        &=\frac{(q;q)_{n}}{(ax;q)_{n}}\sum_{i=0}^{k}\qbinom{k}{i}_{q}\frac{(q^n;q)_{k-i}(ax;q)_{i}a^{k-i}}{(q;q)_{n-i}(aq^nx;q)_{k}}x^{n-i}.
    \end{align*}
\end{proof}

\begin{theorem}\label{theo_der_poch/poch}
For $k\leq n$,
    \begin{equation*}
        D_{q}^{k}\left\{\frac{(ax;q)_{n}}{(bx;q)_{n}}\right\}=\frac{(ax,q;q)_{n}}{(bx;q)_{n}(bq^nx;q)_{k}}\sum_{i=0}^{k}\qbinom{k}{i}_{q}q^{\binom{i}{2}}(-a)^ib^{k-i}\frac{(bx;q)_{i}(q^{n};q)_{k-i}}{(q;q)_{n-i}(ax;q)_{i}}.
    \end{equation*}
\end{theorem}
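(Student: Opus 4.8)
The plan is to apply the $q$-Leibniz rule (\ref{eqn_leibniz}) to the product $(ax;q)_{n}\cdot\frac{1}{(bx;q)_{n}}$, with $f(x)=(ax;q)_{n}$ and $g(x)=\frac{1}{(bx;q)_{n}}$, and then to simplify the resulting sum using Theorem \ref{theo_der_poch_reci}, identity (\ref{eqn_iden4}), and the $q$-shifted factorial identities (\ref{eqn_iden1})--(\ref{eqn_iden6}). Concretely, the first step produces
\begin{equation*}
D_{q}^{k}\left\{\frac{(ax;q)_{n}}{(bx;q)_{n}}\right\}=\sum_{i=0}^{k}q^{i(i-k)}\qbinom{k}{i}_{q}D_{q}^{i}\{(ax;q)_{n}\}\,D_{q}^{k-i}\left\{\frac{1}{(bq^{i}x;q)_{n}}\right\}.
\end{equation*}

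Next I would substitute the two known evaluations. Since $i\leq k\leq n$, identity (\ref{eqn_iden4}) gives $D_{q}^{i}\{(ax;q)_{n}\}=(-a)^{i}q^{\binom{i}{2}}\frac{(q;q)_{n}}{(q;q)_{n-i}}\frac{(ax;q)_{n}}{(ax;q)_{i}}$, and Theorem \ref{theo_der_poch_reci}, applied with $bq^{i}$ in place of $a$, gives $D_{q}^{k-i}\{1/(bq^{i}x;q)_{n}\}=\frac{(q^{n};q)_{k-i}(bq^{i})^{k-i}}{(bq^{i}x;q)_{n+k-i}}$. Collecting powers of $q$, the contributions $q^{i(i-k)}$ (from Leibniz), $q^{\binom{i}{2}}$ (from $D_{q}^{i}\{(ax;q)_{n}\}$), and $q^{i(k-i)}$ (hidden in $(bq^{i})^{k-i}=b^{k-i}q^{i(k-i)}$) combine to $q^{i(i-k)+i(k-i)+\binom{i}{2}}=q^{\binom{i}{2}}$, matching the exponent in the claim.

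The one remaining manipulation is to rewrite $1/(bq^{i}x;q)_{n+k-i}$. By (\ref{eqn_iden2}) we have both $(bx;q)_{n+k}=(bx;q)_{i}(bq^{i}x;q)_{n+k-i}$ and $(bx;q)_{n+k}=(bx;q)_{n}(bq^{n}x;q)_{k}$, so $1/(bq^{i}x;q)_{n+k-i}=(bx;q)_{i}/\big((bx;q)_{n}(bq^{n}x;q)_{k}\big)$. Pulling the $i$-independent factor $\frac{(q;q)_{n}(ax;q)_{n}}{(bx;q)_{n}(bq^{n}x;q)_{k}}$ out of the sum and writing $(ax,q;q)_{n}=(ax;q)_{n}(q;q)_{n}$ then gives exactly the asserted identity.

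This argument is essentially bookkeeping; the only steps needing care are tracking the $q$-exponents so that they collapse to $q^{\binom{i}{2}}$, and choosing the right splitting of $(bx;q)_{n+k}$ via (\ref{eqn_iden2}). Note also that the hypothesis $k\leq n$ is used exactly to ensure $i\leq n$ for every term, so that the nonvanishing branch of (\ref{eqn_iden4}) applies and all factors $(q;q)_{n-i}$ are well defined; without it the sum would have to be truncated at $i=n$.
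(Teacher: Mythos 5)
Your proposal is correct and follows essentially the same route as the paper's proof: the $q$-Leibniz rule applied to $(ax;q)_{n}\cdot\frac{1}{(bx;q)_{n}}$, with Eq.~(\ref{eqn_iden4}) and Theorem~\ref{theo_der_poch_reci} supplying the two derivatives, then the splitting $(bx;q)_{n+k}=(bx;q)_{i}(bq^{i}x;q)_{n+k-i}=(bx;q)_{n}(bq^{n}x;q)_{k}$ to collect the $i$-independent factor. Your explicit bookkeeping of the $q$-exponents collapsing to $q^{\binom{i}{2}}$ and the remark on where $k\leq n$ is used are details the paper leaves implicit, but the argument is the same.
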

\begin{proof}
From Eq.(\ref{eqn_iden4}),
    \begin{align*}
        D_{q}^{k}\left\{\frac{(ax;q)_{n}}{(bx;q)_{n}}\right\}
        &=\sum_{i=0}^{k}q^{i(i-k)}\qbinom{k}{i}_{q}D_{q}^{i}\left\{(ax;q)_{n}\right\}D_{q}^{k-i}\left\{\frac{1}{(bq^ix;q)_{n}}\right\}\\
        &=\sum_{i=0}^{k}q^{i(i-k)}\qbinom{k}{i}_{q}(-a)^iq^{\binom{i}{2}}\frac{(q;q)_{n}}{(q;q)_{n-i}}\frac{(ax;q)_{n}}{(ax;q)_{i}}\frac{(bx;q)_{i}(q^{n};q)_{k-i}(bq^i)^{k-i}}{(bx;q)_{n+k}}\\
        &=\frac{(ax,q;q)_{n}}{(bx;q)_{n}(bq^nx;q)_{k}}\sum_{i=0}^{k}\qbinom{k}{i}_{q}q^{\binom{i}{2}}(-a)^ib^{k-i}\frac{(bx;q)_{i}(q^{n};q)_{k-i}}{(q;q)_{n-i}(ax;q)_{i}}.
    \end{align*}
\end{proof}
If $a=b$, then for all $n\geq0$
\begin{equation*}
    \sum_{i=0}^{k}\qbinom{k}{i}_{q}q^{\binom{i}{2}}(-1)^i\frac{(q^{n};q)_{k-i}}{(q;q)_{n-i}}=0.
\end{equation*}

\begin{theorem}
    \begin{multline*}
        D_{q}^{k}\left\{\frac{(ax,bx;q)_{n}}{(cx,dx;q)_{n}}\right\}\\
        =(q;q)_{n}^2(ax,bx;q)_{n}\sum_{i=0}^{k}\qbinom{k}{i}_{q}\frac{q^{\binom{i}{2}}}{(bx;q)_{i}}\sum_{j=0}^{i}\qbinom{i}{j}_{q}q^{j(j-i)}\frac{(-a)^j}{(q;q)_{n-j}}\frac{(-b)^{i-j}}{(q;q)_{n-i+j}}\frac{(bq^nx;q)_{j}}{(ax;q)_{j}}\\
        \times\frac{1}{(cq^ix,dq^ix;q)_{n}(dq^{n+i}x;q)_{k-i}}\sum_{l=0}^{k-i}\qbinom{k-i}{l}_{q}\frac{(q^n;q)_{l}(q^{n};q)_{k-i-l}(dq^ix;q)_{l}}{(cq^{n+i}x;q)_{l}}c^ld^{k-i-l}.
    \end{multline*}
\end{theorem}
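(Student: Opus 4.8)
The plan is to compute $D_q^k$ of the quotient $\frac{(ax,bx;q)_n}{(cx,dx;q)_n}$ by splitting the product into a numerator part $(ax,bx;q)_n$ and a denominator part $\frac{1}{(cx,dx;q)_n}$, and applying the $q$-Leibniz rule (\ref{eqn_leibniz}) once to separate these two pieces, then applying it two more times internally. First I would write $D_q^k\big\{(ax,bx;q)_n\cdot\frac{1}{(cx,dx;q)_n}\big\}=\sum_{i=0}^k q^{i(i-k)}\qbinom{k}{i}_q D_q^i\{(ax,bx;q)_n\}\, D_q^{k-i}\{\tfrac{1}{(cq^i x,dq^i x;q)_n}\}$. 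This matches the outer sum over $i$ and the shift $c\mapsto cq^i$, $d\mapsto dq^i$ visible in the claimed formula.

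Next, for the numerator factor I would apply the Leibniz rule again: $D_q^i\{(ax;q)_n(bx;q)_n\}=\sum_{j=0}^i q^{j(j-i)}\qbinom{i}{j}_q D_q^j\{(ax;q)_n\}\,D_q^{i-j}\{(bq^j x;q)_n\}$, and then invoke Eq.(\ref{eqn_iden4}) twice (for $D_q^j\{(ax;q)_n\}$ with parameter $a$, and for $D_q^{i-j}\{(bq^jx;q)_n\}$ with parameter $bq^j$). Here I would use the identities (\ref{eqn_iden1})--(\ref{eqn_iden6}) — particularly Eq.(\ref{eqn_iden6}) in the form $(bq^jx;q)_n = \frac{(bx;q)_n(bq^nx;q)_j}{(bx;q)_j}$ — to pull everything back to arguments in $ax$, $bx$, $bq^nx$ and to produce the factor $\frac{(bq^nx;q)_j}{(ax;q)_j(bx;q)_i}$ together with the sign $q^{\binom{i}{2}}$ (obtained by combining $q^{\binom{j}{2}+\binom{i-j}{2}+j(i-j)}=q^{\binom{i}{2}}$) and the powers $(-a)^j(-b)^{i-j}$; the residual $q$-powers from the $q^{j(j-i)}$ prefactor are kept as written. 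This reproduces the middle triple of sums over $i,j$.

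For the denominator factor $D_q^{k-i}\{\tfrac{1}{(cq^ix,dq^ix;q)_n}\}$, I would again apply Leibniz: split $\tfrac{1}{(cq^ix;q)_n}\cdot\tfrac{1}{(dq^ix;q)_n}$, getting $\sum_{l=0}^{k-i}q^{l(l-(k-i))}\qbinom{k-i}{l}_q D_q^l\{\tfrac{1}{(cq^ix;q)_n}\}\,D_q^{k-i-l}\{\tfrac{1}{(dq^{i+l}x;q)_n}\}$, and then apply Theorem \ref{theo_der_poch_reci} to each factor: $D_q^l\{\tfrac{1}{(cq^ix;q)_n}\}=\frac{(q^n;q)_l(cq^i)^l}{(cq^ix;q)_{l+n}}$ and similarly for the $d$-factor. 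Using Eqs.(\ref{eqn_iden2}),(\ref{eqn_iden6}) I would consolidate $(cq^ix;q)_{l+n}=(cq^ix;q)_n(cq^{n+i}x;q)_l$, etc., to land on $\frac{1}{(cq^ix,dq^ix;q)_n(dq^{n+i}x;q)_{k-i}}\cdot\frac{(q^n;q)_l(q^n;q)_{k-i-l}(dq^ix;q)_l}{(cq^{n+i}x;q)_l}c^l d^{k-i-l}$ — exactly the innermost sum. Finally I would collect the $(q;q)_n^2(ax,bx;q)_n$ common prefactor out front.

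The main obstacle — really the only nontrivial point — is the bookkeeping: tracking all the $q$-power prefactors $q^{i(i-k)}$, $q^{j(j-i)}$, $q^{l(l-(k-i))}$ from the three Leibniz applications together with the $q^{\binom{\cdot}{2}}$ factors from Eq.(\ref{eqn_iden4}), and verifying that after the shift identities they collapse to precisely the exponents shown (in particular that the numerator shifts $c\mapsto cq^i,\ d\mapsto dq^i$ and $d\mapsto dq^{i+l}$ compose correctly and that the $(aq^\bullet x;q)_\bullet$, $(bq^\bullet x;q)_\bullet$ arguments all reduce to the four base arguments $ax,bx,bq^nx,cq^{n+i}x$ plus the stated denominators). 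There is no structural difficulty: it is three nested invocations of Leibniz followed by Theorem \ref{theo_der_poch_reci} and Eq.(\ref{eqn_iden4}), with Eqs.(\ref{eqn_iden1})--(\ref{eqn_iden6}) used repeatedly to normalize arguments. One should also note the range restriction (implicitly $k\le n$, as in Theorem \ref{theo_der_poch/poch}) so that none of the $D_q^{\bullet}\{(\bullet x;q)_n\}$ terms vanish prematurely and the binomial-coefficient denominators $(q;q)_{n-j}$, $(q;q)_{n-i+j}$ make sense.
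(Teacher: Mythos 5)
Your proposal is correct and follows essentially the same route as the paper's own proof: one application of the $q$-Leibniz rule (\ref{eqn_leibniz}) to separate $(ax,bx;q)_{n}$ from $1/(cx,dx;q)_{n}$, then inner Leibniz expansions handled via Eq.(\ref{eqn_iden4}) and Theorem \ref{theo_der_poch_reci}, with Eqs.(\ref{eqn_iden2})--(\ref{eqn_iden6}) normalizing the arguments (and indeed the $q^{i(i-k)}$ prefactor cancels against the $q$-powers from $(cq^{i})^{l}(dq^{i+l})^{k-i-l}$, exactly as your bookkeeping remark anticipates).
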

\begin{proof}
    \begin{align*}
        &D_{q}^{k}\left\{\frac{(ax,bx;q)_{n}}{(cx,dx;q)_{n}}\right\}\\
        &=\sum_{i=0}^{k}\qbinom{k}{i}_{q}q^{i(i-k)}D_{q}^{i}\left\{(ax,bx;q)_{n}\right\}D_{q}^{k-i}\left\{\frac{1}{(cq^ix,dq^ix;q)_{n}}\right\}\\
        &=(q;q)_{n}^2(ax,bx;q)_{n}\sum_{i=0}^{k}\qbinom{k}{i}_{q}\frac{q^{\binom{i}{2}}}{(bx;q)_{i}}\sum_{j=0}^{i}\qbinom{i}{j}_{q}q^{j(j-i)}\frac{(-a)^j}{(q;q)_{n-j}}\frac{(-b)^{i-j}}{(q;q)_{n-i+j}}\frac{(bq^nx;q)_{j}}{(ax;q)_{j}}\\
        &\times\frac{1}{(cq^ix,dq^ix;q)_{n}(dq^{n+i}x;q)_{k-i}}\sum_{l=0}^{k-i}\qbinom{k-i}{l}_{q}\frac{(q^n;q)_{l}(q^{n};q)_{k-i-l}(dq^ix;q)_{l}}{(cq^{n+i}x;q)_{l}}c^ld^{k-i-l}.
    \end{align*}
\end{proof}
Por hacer $n\rightarrow\infty$
\begin{multline*}
        D_{q}^{k}\left\{\frac{(ax,bx;q)_{\infty}}{(cx,dx;q)_{\infty}}\right\}\\
        =\frac{(ax,bx;q)_{\infty}}{(cx,dx;q)_{\infty}}\sum_{i=0}^{k}\qbinom{k}{i}_{q}\frac{q^{\binom{i}{2}}(cx,dx;q)_{i}}{(bx;q)_{i}}\sum_{j=0}^{i}\qbinom{i}{j}_{q}q^{j(j-i)}\frac{(-a)^j(-b)^{i-j}}{(ax;q)_{j}}\\
        \times\sum_{l=0}^{k-i}\qbinom{k-i}{l}_{q}(dq^ix;q)_{l}c^ld^{k-i-l}.
    \end{multline*}

\section{$q$-derivative of ${}_{r}\Phi_{s}$ with respect to parameters}

\begin{theorem}
For all $n\in\N$,
    \begin{multline*}
        D_{c_{1},q}^{k}\left\{{}_{r}\Phi_{s}\left(
        \begin{array}{c}
             a_{1},\ldots,a_{r} \\
             c_{1},c_{2},\ldots,c_{s}
        \end{array};
        q,u,z
        \right)\right\}\\
        =\frac{(-1)^{1+s-r}\prod_{i=1}^{r}(1-a_{i})(q;q)_{k}z}{(c_{1};q)_{k+1}(1-q)\prod_{i=2}^{s}(1-c_{i})}\\
        \hspace{5cm}{}_{r+1}\Phi_{s+1}\left(
        \begin{array}{c}
             a_{1}q,\cdots,a_{r}q,q^{k+1} \\
             c_{1}q^{k+1},c_{2},\cdots,c_{s},q^2
        \end{array};
        q,u,q^{1+s-r}uz
        \right).
    \end{multline*}
\end{theorem}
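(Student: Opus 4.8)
The plan is to differentiate the defining series \eqref{eqn_def_hyp} termwise with respect to the parameter $c_1$. In the summand of the ${}_{r}\Phi_{s}$ on the left, the only factor containing $c_1$ is $1/(c_1;q)_n$, so $D_{c_1,q}^{k}$ passes inside the sum and acts only on that factor. Applying Theorem~\ref{theo_der_poch_reci} with $a=1$ (so $c_1$ plays the role of the variable $x$) gives
\[
D_{c_1,q}^{k}\!\left\{\frac{1}{(c_1;q)_n}\right\}=\frac{(q^n;q)_k}{(c_1;q)_{k+n}},
\]
hence
\[
D_{c_1,q}^{k}\Big\{{}_{r}\Phi_{s}(\cdots)\Big\}=\sum_{n=0}^{\infty}u^{\binom n2}\,\frac{(a_1,\ldots,a_r;q)_n\,(q^n;q)_k}{(q,c_2,\ldots,c_s;q)_n\,(c_1;q)_{k+n}}\,\big[(-1)^n q^{\binom n2}\big]^{1+s-r}z^n .
\]
Since $(q^0;q)_k=(1;q)_k=0$ for $k\ge 1$, the $n=0$ term drops out, and I reindex by $n=m+1$ with $m\ge 0$.

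Next I would split off the first factor from each $q$-shifted factorial using \eqref{eqn_iden2}: $(a_i;q)_{m+1}=(1-a_i)(a_iq;q)_m$, $(q;q)_{m+1}=(1-q)(q^2;q)_m$, $(c_i;q)_{m+1}=(1-c_i)(c_iq;q)_m$, and $(c_1;q)_{k+m+1}=(c_1;q)_{k+1}(c_1q^{k+1};q)_m$; also $(q^{m+1};q)_k=\dfrac{(q;q)_{m+k}}{(q;q)_m}=\dfrac{(q;q)_k}{(q;q)_m}\,(q^{k+1};q)_m$. With $\binom{m+1}{2}=\binom m2+m$ I split $u^{\binom{m+1}{2}}=u^{\binom m2}u^{m}$ and $\big[(-1)^{m+1}q^{\binom{m+1}{2}}\big]^{1+s-r}=(-1)^{1+s-r}q^{m(1+s-r)}\big[(-1)^m q^{\binom m2}\big]^{1+s-r}$, and write $z^{m+1}=z\,z^{m}$.

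Collecting every factor independent of $m$ into a prefactor then leaves
\[
\frac{(-1)^{1+s-r}\prod_{i=1}^{r}(1-a_i)\,(q;q)_k\,z}{(c_1;q)_{k+1}(1-q)\prod_{i=2}^{s}(1-c_i)}\sum_{m=0}^{\infty}u^{\binom m2}\frac{(a_1q,\ldots,a_rq,q^{k+1};q)_m}{(q,\,c_1q^{k+1},\,c_2q,\ldots,c_sq,\,q^2;q)_m}\big[(-1)^m q^{\binom m2}\big]^{1+s-r}\big(uq^{1+s-r}z\big)^{m},
\]
in which the $m$-sum has exactly $r+1$ upper and $s+1$ lower parameters and exponent $1+s-r=1+(s+1)-(r+1)$, so it is precisely the claimed ${}_{r+1}\Phi_{s+1}$ evaluated at $uq^{1+s-r}z$, and the prefactor coincides with the one in the statement.

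The only delicate point is the bookkeeping of the reindexing $n\mapsto m+1$: one must track carefully how $[(-1)^n q^{\binom n2}]^{1+s-r}$ transforms — this is where the external $(-1)^{1+s-r}$ is produced and where $q^{m(1+s-r)}$ combines with $u^{m}z^{m}$ to give the new argument $uq^{1+s-r}z$ — and one must check that $(q^{m+1};q)_k/(q;q)_m$ simultaneously supplies the new numerator parameter $q^{k+1}$, the normalizing $1/(q;q)_m$ of the target series, and the leftover constant $(q;q)_k$. Interchanging $D_{c_1,q}^{k}$ with the infinite sum is legitimate wherever the series are taken to converge, formally or absolutely.
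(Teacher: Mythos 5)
Your proof is correct and follows essentially the same route as the paper: termwise application of Theorem~\ref{theo_der_poch_reci} to $1/(c_1;q)_n$, dropping the $n=0$ term because $(q^0;q)_k=0$, shifting the index, and splitting off the first factor of each $q$-shifted factorial via Eq.~(\ref{eqn_iden2}). One caveat: your collected $m$-sum correctly has lower parameters $c_1q^{k+1},\,c_2q,\ldots,c_sq,\,q^2$, since $(c_i;q)_{m+1}=(1-c_i)(c_iq;q)_m$, whereas the theorem as stated (and the last line of the paper's own proof) writes $c_2,\ldots,c_s$ without the extra factor $q$; so your series does not literally match the displayed statement, the discrepancy being a slip in the statement rather than in your computation, and you should not assert exact agreement without flagging it.
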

\begin{proof}
From Eq.(\ref{eqn_def_hyp}) and Theorem \ref{theo_der_poch_reci},
    \begin{align*}
        &D_{c_{1},q}^{k}\left\{{}_{r}\Phi_{s}\left(
        \begin{array}{c}
             a_{1},\ldots,a_{r} \\
             c_{1},c_{2},\ldots,c_{s}
        \end{array};
        q,u,z
        \right)\right\}\\
        &\hspace{1cm}=\sum_{n=0}^{\infty}u^{\binom{n}{2}}\frac{(a_{1};q)_{n}\cdots(a_{r};q)_{n}}{(c_{2};q)_{n}\cdots(c_{s};q)_{n}(q;q)_{n}}\left((-1)^nq^{\binom{n}{2}}\right)^{1+s-r}z^nD_{c_{1},q}^{k}\left\{\frac{1}{(c_{1};q)_{n}}\right\}\\
        &\hspace{1cm}=c_{1}^k\sum_{n=0}^{\infty}u^{\binom{n}{2}}\frac{(a_{1};q)_{n}\cdots(a_{r};q)_{n}(q^n;q)_{k}}{(c_{1};q)_{k+n}(c_{2};q)_{n}\cdots(c_{s};q)_{n}(q;q)_{n}}\left((-1)^nq^{\binom{n}{2}}\right)^{1+s-r}z^n.
    \end{align*}
Taking into account that $(q^0;q)_{k}=0$, we shift the index $n$ by one unit and apply the identity Eq.(\ref{eqn_iden2}),
    \begin{align*}
        &D_{c_{1},q}^{k}\left\{{}_{r}\Phi_{s}\left(
        \begin{array}{c}
             a_{1},\ldots,a_{r} \\
             c_{1},c_{2},\ldots,c_{s}
        \end{array};
        q,u,z
        \right)\right\}\\
        &=\frac{z}{(c_{1};q)_{k}}\sum_{n=0}^{\infty}u^{\binom{n+1}{2}}\frac{(a_{1};q)_{n+1}\cdots(a_{r};q)_{n+1}(q^{n+1};q)_{k}}{(c_{1}q^k;q)_{n+1}(c_{2};q)_{n+1}\cdots(c_{s};q)_{n+1}(q;q)_{n+1}}\\
        &\hspace{8cm}\left((-1)^{n+1}q^{\binom{n+1}{2}}\right)^{1+s-r}z^n\\
        &=\frac{(-1)^{1+s-r}\prod_{i=1}^{r}(1-a_{i})z}{(cx;q)_{k+1}(1-q)\prod_{i=2}^{s}(1-c_{i})}\sum_{n=0}^{\infty}\frac{u^{\binom{n}{2}}(a_{1}q;q)_{n}\cdots(a_{r}q;q)_{n}(q^{n+1};q)_{k}}{(c_{1}q^{k+1};q)_{n}(c_{2}q;q)_{n}\cdots(c_{s}q;q)_{n}(q^2;q)_{n}}\\
        &\hspace{8cm}\times\left((-1)^nq^{\binom{n}{2}}\right)^{1+s-r}(q^{1+s-r}uz)^n\\
        &=\frac{(-1)^{1+s-r}\prod_{i=1}^{r}(1-a_{i})(q;q)_{k}z}{(cx;q)_{k+1}(1-q)\prod_{i=2}^{s}(1-c_{i})}\\
        &\hspace{1cm}\times\sum_{n=0}^{\infty}\frac{u^{\binom{n}{2}}(a_{1}q;q)_{n}\cdots(a_{r}q;q)_{n}(q^{k+1};q)_{n}}{(c_{1}q^{k+1};q)_{n}(c_{2};q)_{n}\cdots(c_{s};q)_{n}(q^2;q)_{n}(q;q)_{n}}\left((-1)^nq^{\binom{n}{2}}\right)^{1+s-r}(q^{1+s-r}uz)^n\\
        &=\frac{(-1)^{1+s-r}\prod_{i=1}^{r}(1-a_{i})(q;q)_{k}z}{(c_{1};q)_{k+1}(1-q)\prod_{i=2}^{s}(1-c_{i})}\\
        &\hspace{5cm}{}_{r+1}\Phi_{s+1}\left(
        \begin{array}{c}
             a_{1}q,\cdots,a_{r}q,q^{k+1} \\
             c_{1}q^{k+1},c_{2},\cdots,c_{s},q^2
        \end{array};
        q,u,q^{1+s-r}uz
        \right).
    \end{align*}
\end{proof}

\begin{theorem}
For all $n\in\N$,
    \begin{multline*}
        D_{c,q}^{k}\left\{{}_{r}\Phi_{1}\left(
        \begin{array}{c}
             a_{1},\ldots,a_{r} \\
             c
        \end{array};
        q,u,z
        \right)\right\}\\
        =\frac{(-1)^{2-r}\prod_{i=1}^{r}(1-a_{i})(q;q)_{k}z}{(c;q)_{k+1}(1-q)}\\
        \hspace{5cm}{}_{r+1}\Phi_{2}\left(
        \begin{array}{c}
             a_{1}q,\cdots,a_{r}q,q^{k+1} \\
             cq^{k+1},q^2
        \end{array};
        q,u,q^{1+s-r}uz
        \right).
    \end{multline*}
\end{theorem}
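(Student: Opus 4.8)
The plan is to observe that this statement is exactly the $s=1$ instance of the preceding theorem, so the proof amounts to a careful specialization rather than a fresh computation. In that theorem take $s=1$ and rename the single lower parameter $c_{1}$ as $c$; then $D_{c_{1},q}^{k}$ is literally $D_{c,q}^{k}$ and the left-hand side becomes $D_{c,q}^{k}\{{}_{r}\Phi_{1}(a_{1},\ldots,a_{r};c;q,u,z)\}$, as required.

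On the right-hand side four things must be tracked. First, $(-1)^{1+s-r}$ becomes $(-1)^{2-r}$. Second, the denominator factor $\prod_{i=2}^{s}(1-c_{i})$ is now an empty product and equals $1$, so the scalar prefactor collapses to $\frac{(-1)^{2-r}\prod_{i=1}^{r}(1-a_{i})(q;q)_{k}z}{(c;q)_{k+1}(1-q)}$. Third, in the resulting series ${}_{r+1}\Phi_{s+1}$ the lower list $c_{1}q^{k+1},c_{2},\ldots,c_{s},q^{2}$ loses the block $c_{2},\ldots,c_{s}$ and becomes $cq^{k+1},q^{2}$, so ${}_{r+1}\Phi_{s+1}$ turns into ${}_{r+1}\Phi_{2}$ with upper list $a_{1}q,\ldots,a_{r}q,q^{k+1}$. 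Fourth, the deformation parameter stays $u$ and the argument stays $q^{1+s-r}uz$ (which here equals $q^{2-r}uz$). Assembling these pieces yields precisely the displayed identity.

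If one prefers a self-contained derivation instead of quoting the preceding theorem, I would simply rerun its short argument: expand ${}_{r}\Phi_{1}$ by Eq.(\ref{eqn_def_hyp}), differentiate term by term, use Theorem \ref{theo_der_poch_reci} with base-point parameter $1$ to evaluate $D_{c,q}^{k}\{1/(c;q)_{n}\}$, note that the $n=0$ term drops because $(q^{0};q)_{k}=(1;q)_{k}=0$ for $k\ge 1$, shift $n\mapsto n+1$, and peel off the constants $(1-a_{i})$, $(1-q)$ and $(q;q)_{k}$ via Eq.(\ref{eqn_iden2}) before re-summing. The only spot requiring care in either approach is the same bookkeeping step that already occurs in the general case: combining the index shift with the re-indexing $(q^{n+1};q)_{k}=(q;q)_{k}(q^{k+1};q)_{n}/(q;q)_{n}$ from Eq.(\ref{eqn_iden6}), which is what converts the factor $(q^{n+1};q)_{k}$ into the new numerator parameter $q^{k+1}$ (and, analogously, $(q;q)_{n+1}=(1-q)(q^{2};q)_{n}$ produces the lower parameter $q^{2}$, while $(c;q)_{k+n+1}=(c;q)_{k+1}(cq^{k+1};q)_{n}$ produces $cq^{k+1}$). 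Beyond this there is no genuine obstacle.
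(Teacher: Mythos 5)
Your proposal is correct and matches the paper's intent: the paper states this theorem without proof precisely because it is the $s=1$ specialization of the preceding theorem, which is exactly the reduction you carry out (and your fallback of rerunning that proof with Theorem \ref{theo_der_poch_reci}, the index shift, and Eqs.~(\ref{eqn_iden2}), (\ref{eqn_iden6}) mirrors the paper's argument for the general case). Your remark that the displayed argument $q^{1+s-r}uz$ should be read as $q^{2-r}uz$ is the right reading of the statement.
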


\section{New identities of the deformed $q$-exponential operator}

Set $u\in\C$. In \cite{orozco} the $u$-deformed $q$-exponential operator $\T(yD_{q}|u)$ is defined by letting
\begin{equation*}
    \T(yD_{q}|u)=\sum_{n=0}^{\infty}u^{\binom{n}{2}}\frac{(yD_{q})^{n}}{(q;q)_{n}}. 
\end{equation*}
The $u$-deformed $q$-exponential operator $\T(yD_{q}|u)$ generalizes the operators given by Chen and Liu \cite{chen1,chen2}:
\begin{equation*}
    \T(bD_{q})=\sum_{n=0}^{\infty}\frac{(bD_{q})^n}{(q;q)_{n}}=\T(bD_{q}|1),
\end{equation*}
and by Saad et al. \cite{saad,saad2}:
\begin{equation*}
    \RR(bD_{q})=\sum_{n=0}^{\infty}(-1)^{n}\frac{(bD_{q})^nq^{\binom{n}{2}}}{(q;q)_{n}}=\T(-bD_{q}|q).
\end{equation*}
Some $q$-operator identities:
\begin{theorem}\label{theo_dexp_qpoch}
For all $n\geq0$
    \begin{equation}
        \T(yD_{q}|u)\left\{\frac{1}{(ax;q)_{n}}\right\}=\frac{1}{(ax;q)_{n}}{}_{1}\Phi_{1}\left(
        \begin{array}{c}
             q^{n} \\
             aq^{n}x
        \end{array};
        q,uq^{-1},-ay
        \right).
    \end{equation}
\end{theorem}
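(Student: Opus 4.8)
The plan is to apply the operator $\T(yD_{q}|u)$ term by term to $\frac{1}{(ax;q)_{n}}$, treating the operator as a formal power series in $D_q$ and using Theorem~\ref{theo_der_poch_reci} to evaluate each $D_q^m\{1/(ax;q)_n\}$. Concretely, I would write
\begin{align*}
    \T(yD_{q}|u)\left\{\frac{1}{(ax;q)_{n}}\right\}
    &=\sum_{m=0}^{\infty}u^{\binom{m}{2}}\frac{y^m}{(q;q)_{m}}D_{q}^{m}\left\{\frac{1}{(ax;q)_{n}}\right\}\\
    &=\sum_{m=0}^{\infty}u^{\binom{m}{2}}\frac{y^m}{(q;q)_{m}}\cdot\frac{(q^n;q)_{m}a^{m}}{(ax;q)_{m+n}}.
\end{align*}
Then I would pull out the factor $\frac{1}{(ax;q)_{n}}$ using the splitting identity Eq.(\ref{eqn_iden2}) in the form $(ax;q)_{m+n}=(ax;q)_{n}(aq^{n}x;q)_{m}$, leaving
\begin{equation*}
    \frac{1}{(ax;q)_{n}}\sum_{m=0}^{\infty}u^{\binom{m}{2}}\frac{(q^n;q)_{m}}{(q;q)_{m}(aq^{n}x;q)_{m}}(ay)^m.
\end{equation*}

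The remaining task is to recognize the sum as a deformed basic hypergeometric series. Comparing with the definition Eq.(\ref{eqn_def_hyp}) of ${}_{r}\Phi_{s}$ with $r=s=1$: there $1+s-r=1$, so the general term is $u^{\binom{n}{2}}\frac{(a_1;q)_n}{(q;q)_n(b_1;q)_n}(-1)^n q^{\binom{n}{2}}z^n$. I would match $a_1=q^n$ (the numerator parameter), $b_1=aq^nx$ (the denominator parameter), and then solve $(-1)^m q^{\binom{m}{2}} z^m = (ay)^m$ after accounting for the extra power of $u$. Since my sum has $u^{\binom{m}{2}}$ but no $q^{\binom{m}{2}}$, whereas the ${}_1\Phi_1$ template carries a built-in $q^{\binom{m}{2}}$, I need to absorb a compensating factor $q^{-\binom{m}{2}}$; writing $q^{-\binom{m}{2}}=(q^{-1})^{\binom{m}{2}}$ and combining $u^{\binom{m}{2}}q^{-\binom{m}{2}}=(uq^{-1})^{\binom{m}{2}}$ shows the deformation parameter of the target series must be $uq^{-1}$, which matches the claimed $\T(yD_{q}|u)$-identity. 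Finally $(-1)^m z^m=(ay)^m$ forces $z=-ay$, again matching the statement.

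The main obstacle is bookkeeping the interplay between the $u^{\binom{m}{2}}$ coming from the operator and the intrinsic $q^{\binom{m}{2}}$ sitting in the ${}_1\Phi_1$ definition — getting the deformation parameter right as $uq^{-1}$ rather than $u$ requires care with the identity $\binom{m}{2}$ exponents and the sign convention $(-1)^m$. A secondary point worth a remark is convergence: the operator is applied formally, but since $0<|q|<1$ and $(q^n;q)_m$ vanishes for $m$ large relative to... actually $(q^n;q)_m$ does not vanish, so one should note that the series converges for suitable $y$ (the terms behave like $(ay)^m/(q;q)_m$ up to the $u^{\binom{m}{2}}$ factor, so for $|u|\le 1$ it is entire in $y$, and more generally it is a well-defined formal/analytic object), mirroring the convention already used implicitly when $D_q^k$ is applied to infinite series earlier in the paper. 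Once the parameter matching is pinned down, the proof is just the two displayed computations above followed by the identification.
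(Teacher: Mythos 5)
Your proposal is correct and follows essentially the same route as the paper: expand $\T(yD_q|u)$ termwise, evaluate $D_q^m\{1/(ax;q)_n\}$ via Theorem~\ref{theo_der_poch_reci}, split $(ax;q)_{m+n}=(ax;q)_n(aq^nx;q)_m$, and identify the resulting sum as the ${}_1\Phi_1$ with deformation parameter $uq^{-1}$ and argument $-ay$. The parameter-matching bookkeeping you spell out (absorbing the built-in $(-1)^mq^{\binom{m}{2}}$ into $(uq^{-1})^{\binom{m}{2}}$ and $z=-ay$) is exactly what the paper's final step does implicitly.
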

\begin{proof}
We have
    \begin{align*}
        \T(yD_{q}|u)\left\{\frac{1}{(ax;q)_{n}}\right\}&=\sum_{k=0}^{\infty}u^{\binom{k}{2}}\frac{y^k}{(q;q)_{k}}D_{q}^{k}\left\{\frac{1}{(ax;q)_{n}}\right\}\\
        &=\sum_{k=0}^{\infty}u^{\binom{k}{2}}\frac{y^k}{(q;q)_{k}}\frac{(q^n;q)_{k}a^k}{(ax;q)_{k+n}}\\
        &=\sum_{k=0}^{\infty}u^{\binom{k}{2}}\frac{(ay)^k}{(q;q)_{k}}\frac{(q^{n};q)_{k}}{(ax;q)_{n}(aq^{n}x;q)_{k}}\\
        &=\frac{1}{(ax;q)_{n}}{}_{1}\Phi_{1}\left(
        \begin{array}{c}
             q^{n} \\
             aq^{n}x
        \end{array};
        q,uq^{-1},-ay
        \right)
    \end{align*}
as claimed.    
\end{proof}

\begin{theorem}
For all $n\geq0$
    \begin{multline}
        \T(yD_{q}|u)\left\{\frac{(ax;q)_{n}}{(bx;q)_{n}}\right\}\\
        =\frac{(ax;q)_{n}}{(bx;q)_{n}}\sum_{i=0}^{n}(uq)^{\binom{i}{2}}\qbinom{n}{i}_{q}\frac{(bx;q)_{i}(-ay)^i}{(ax;q)_{i}(bq^nx;q)_{i}}{}_{1}\Phi_{1}\left(
     \begin{array}{c}
         q^n \\
        bq^{n+i}x
        \end{array};
       q,uq^{-1},-u^iby
        \right).
    \end{multline}
\end{theorem}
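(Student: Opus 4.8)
The plan is to let the operator act term by term and then invoke Theorem~\ref{theo_der_poch/poch}. Using $\T(yD_{q}|u)=\sum_{k\ge0}u^{\binom{k}{2}}(q;q)_{k}^{-1}(yD_{q})^{k}$ one has
\[
\T(yD_{q}|u)\left\{\frac{(ax;q)_{n}}{(bx;q)_{n}}\right\}=\sum_{k=0}^{\infty}\frac{u^{\binom{k}{2}}y^{k}}{(q;q)_{k}}\,D_{q}^{k}\left\{\frac{(ax;q)_{n}}{(bx;q)_{n}}\right\}.
\]
One first observes that the closed form for $D_{q}^{k}\{(ax;q)_{n}/(bx;q)_{n}\}$ obtained in the proof of Theorem~\ref{theo_der_poch/poch} in fact holds for every $k\ge0$: in the Leibniz expansion $D_{q}^{i}\{(ax;q)_{n}\}=0$ for $i>n$ by Eq.~(\ref{eqn_iden4}), so the inner $i$-sum effectively runs over $0\le i\le\min(k,n)$ and may be written as a sum over $0\le i\le n$. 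Substituting it turns the right-hand side into a double sum over $k$ and $i$.

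The next step is to interchange the two summations, letting $i$ run from $0$ to $n$ and $k$ from $i$ to $\infty$ (legitimate by absolute convergence for $0<|q|<1$), to simplify $(q;q)_{k}^{-1}\qbinom{k}{i}_{q}=(q;q)_{i}^{-1}(q;q)_{k-i}^{-1}$, and then to shift $k\mapsto k+i$. Two bookkeeping identities do the work here: $\binom{k+i}{2}=\binom{k}{2}+\binom{i}{2}+ki$, so that $u^{\binom{k+i}{2}}=u^{\binom{k}{2}}u^{\binom{i}{2}}(u^{i})^{k}$ and the stray $q^{\binom{i}{2}}$ combines with $u^{\binom{i}{2}}$ into $(uq)^{\binom{i}{2}}$; and Eq.~(\ref{eqn_iden2}) in the form $(bq^{n}x;q)_{k+i}=(bq^{n}x;q)_{i}(bq^{n+i}x;q)_{k}$. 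After these manipulations the inner $k$-sum is exactly $\sum_{k\ge0}u^{\binom{k}{2}}(u^{i}by)^{k}(q^{n};q)_{k}\big/\big((q;q)_{k}(bq^{n+i}x;q)_{k}\big)$, while the remaining $i$-dependent factor, after using $(ax,q;q)_{n}=(ax;q)_{n}(q;q)_{n}$, $(q;q)_{n}/((q;q)_{i}(q;q)_{n-i})=\qbinom{n}{i}_{q}$ and $(-a)^{i}y^{i}=(-ay)^{i}$, is precisely $(uq)^{\binom{i}{2}}\qbinom{n}{i}_{q}(bx;q)_{i}(-ay)^{i}\big/\big((ax;q)_{i}(bq^{n}x;q)_{i}\big)$ together with the overall factor $(ax;q)_{n}/(bx;q)_{n}$.

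Finally one recognizes the inner $k$-sum as the ${}_{1}\Phi_{1}$ on the right-hand side of the statement: in Eq.~(\ref{eqn_def_hyp}) with $r=s=1$ the factor $[(-1)^{k}q^{\binom{k}{2}}]^{1+s-r}$ is $(-1)^{k}q^{\binom{k}{2}}$, so the deformation parameter $uq^{-1}$ contributes $(uq^{-1})^{\binom{k}{2}}q^{\binom{k}{2}}=u^{\binom{k}{2}}$ and the argument $-u^{i}by$ contributes $(-1)^{k}(-u^{i}by)^{k}=(u^{i}by)^{k}$; hence the sum equals ${}_{1}\Phi_{1}\!\left(\begin{array}{c}q^{n}\\ bq^{n+i}x\end{array};q,uq^{-1},-u^{i}by\right)$ and the stated identity follows. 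No step is conceptually deep; the only real hazard is clerical — keeping the signs, the power of $u$, and the two shifts ($k\mapsto k+i$ in the summation index and $n\mapsto n+i$ in the base) mutually consistent so that the collapsed sum lines up exactly with a ${}_{1}\Phi_{1}$ — and it is prudent to check the endpoints $i=0$ and $i=n$ directly.
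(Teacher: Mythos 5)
Your proposal is correct and follows essentially the same route as the paper: expand $\T(yD_{q}|u)$ termwise, insert the closed form of $D_{q}^{k}\{(ax;q)_{n}/(bx;q)_{n}\}$ from Theorem~\ref{theo_der_poch/poch}, interchange the sums, shift $k\mapsto k+i$, and identify the inner sum as the ${}_{1}\Phi_{1}$. Your explicit justification that the closed form extends to all $k\geq0$ (because $D_{q}^{i}\{(ax;q)_{n}\}=0$ for $i>n$) is a point the paper passes over silently, so it is a welcome addition rather than a deviation.
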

\begin{proof}
    \begin{align*}
        &\T(yD_{q}|u)\left\{\frac{(ax;q)_{n}}{(bx;q)_{n}}\right\}\\
        &=\sum_{k=0}^{\infty}u^{\binom{k}{2}}\frac{y^k}{(q;q)_{k}}D_{q}^{k}\left\{\frac{(ax;q)_{n}}{(bx;q)_{n}}\right\}\\
        &=\frac{(ax,q;q)_{n}}{(bx;q)_{n}}\sum_{k=0}^{\infty}u^{\binom{k}{2}}\frac{y^k}{(q;q)_{k}}\frac{1}{(bq^nx;q)_{k}}\sum_{i=0}^{k}\qbinom{k}{i}_{q}q^{\binom{i}{2}}(-a)^ib^{k-i}\frac{(bx;q)_{i}(q^{n};q)_{k-i}}{(q;q)_{n-i}(ax;q)_{i}}\\
        &=\frac{(ax;q)_{n}}{(bx;q)_{n}}\sum_{i=0}^{n}(uq)^{\binom{i}{2}}\qbinom{n}{i}_{q}\frac{(bx;q)_{i}(-ay)^i}{(ax;q)_{i}(bq^nx;q)_{i}}\sum_{k=0}^{\infty}u^{\binom{k}{2}}\frac{(q^{n};q)_{k}}{(q;q)_{k}(bq^{n+i}x;q)_{k}}(u^iyb)^{k}\\
        &=\frac{(ax;q)_{n}}{(bx;q)_{n}}\sum_{i=0}^{n}(uq)^{\binom{i}{2}}\qbinom{n}{i}_{q}\frac{(bx;q)_{i}(-ay)^i}{(ax;q)_{i}(bq^nx;q)_{i}}{}_{1}\Phi_{1}\left(
     \begin{array}{c}
         q^n \\
        bq^{n+i}x
        \end{array};
       q,uq^{-1},-u^iby
        \right).
    \end{align*}
\end{proof}
If $a=b$, then for all $n\geq0$
\begin{equation*}
    \sum_{i=0}^{n}(uq)^{\binom{i}{2}}\qbinom{n}{i}_{q}\frac{(-ay)^i}{(aq^nx;q)_{i}}{}_{1}\Phi_{1}\left(
     \begin{array}{c}
         q^n \\
        aq^{n+i}x
        \end{array};
       q,uq^{-1},-u^iay
        \right)=1
\end{equation*}

\begin{theorem}
    \begin{multline*}
        \T(yD_{q}|u)\left\{\frac{x^n}{(ax;q)_{n}}\right\}\\
        =\frac{1}{(ax;q)_{n}}\sum_{i=0}^{n}\qbinom{n}{i}_{q}u^{\binom{i}{2}}\frac{(ax;q)_{i}}{(aq^{n}x;q)_{i}}x^{n-i}y^{i}{}_{1}\Phi_{1}\left(
        \begin{array}{c}
             q^{n} \\
             aq^{n+i}x
        \end{array};
        q,uq^{-1},-u^iay
        \right).
    \end{multline*}
\end{theorem}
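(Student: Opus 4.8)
The plan is to expand $\T(yD_q|u)$ as the series $\sum_{k\ge0}u^{\binom k2}\frac{y^k}{(q;q)_k}D_q^k\{\cdot\}$ applied to $x^n/(ax;q)_n$, and then feed in the already-established formula for $D_q^k\{x^n/(ax;q)_n\}$ from the theorem two above, namely
\begin{equation*}
D_q^k\left\{\frac{x^n}{(ax;q)_n}\right\}=\frac{(q;q)_n}{(ax;q)_n}\sum_{i=0}^k\qbinom{k}{i}_q\frac{(q^n;q)_{k-i}(ax;q)_i a^{k-i}}{(q;q)_{n-i}(aq^nx;q)_k}x^{n-i}.
\end{equation*}
Substituting this into the operator series gives a double sum over $k\ge0$ and $0\le i\le k$; the key move is to interchange the order of summation so that $i$ becomes the outer index (running $0\le i\le n$, since $(q;q)_{n-i}$ in the denominator kills the terms with $i>n$) and $k$ the inner one. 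After the swap I would set $k=i+m$ with $m\ge0$ and collect all the $m$-independent factors outside the inner sum.

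The second step is to recognize the inner sum over $m$ as a ${}_1\Phi_1$. Using the standard manipulations $\qbinom{i+m}{i}_q=\frac{(q;q)_{i+m}}{(q;q)_i(q;q)_m}$, $(q^n;q)_{(i+m)-i}=(q^n;q)_m$, $(aq^nx;q)_{i+m}=(aq^nx;q)_i(aq^{n+i}x;q)_m$, together with the binomial splitting $\binom{i+m}{2}=\binom i2+\binom m2+im$ to separate the power of $u$ as $u^{\binom i2}u^{\binom m2}u^{im}$, the inner sum should collapse to
\begin{equation*}
\sum_{m=0}^{\infty}u^{\binom m2}\frac{(q^n;q)_m}{(q;q)_m(aq^{n+i}x;q)_m}(\text{something})^m,
\end{equation*}
which by the definition of ${}_1\Phi_1$ (with deformation parameter $uq^{-1}$, exactly as in Theorem~\ref{theo_dexp_qpoch}) is ${}_1\Phi_1\!\left(\begin{array}{c}q^n\\aq^{n+i}x\end{array};q,uq^{-1},-u^iay\right)$. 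Tracking the powers of $y$, $a$, $q$ and $u$ through this reindexing is the bookkeeping that produces the prefactor $\qbinom{n}{i}_q u^{\binom i2}\frac{(ax;q)_i}{(aq^nx;q)_i}x^{n-i}y^i$, after using $(q;q)_n/((q;q)_i(q;q)_{n-i})=\qbinom ni_q$.

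The main obstacle I anticipate is purely the exponent bookkeeping: there are powers of $q$ coming from the implicit $[(-1)^nq^{\binom n2}]^{1+s-r}$ with $r=s=1$ (so that factor is trivial, which helps), from the $a^{k-i}=a^m$, from $y^k=y^{i+m}$, and from rewriting $u^{\binom k2}$ via the binomial identity; I will need to check that the leftover power of $u$ attached to the summation variable in the inner sum is exactly $u^i$ times the base of the ${}_1\Phi_1$ argument (explaining the $-u^iay$), and that the $uq^{-1}$ appears in the deformation slot rather than $u$ — this mirrors the shift already seen in Theorem~\ref{theo_dexp_qpoch} and in the preceding theorem, so the pattern is a reliable guide. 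Once the interchange of summation is justified (absolute convergence for $|q|<1$ and the truncation at $i\le n$ make this routine) and the exponents are matched, the identity follows; I expect no conceptual difficulty beyond careful tracking, and the structure will parallel the proof of the $(ax;q)_n/(bx;q)_n$ theorem immediately above essentially line for line.
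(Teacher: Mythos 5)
Your proposal follows the paper's proof essentially line for line: expand $\T(yD_{q}|u)$ as $\sum_{k\geq0}u^{\binom{k}{2}}\frac{y^{k}}{(q;q)_{k}}D_{q}^{k}$, substitute the earlier formula for $D_{q}^{k}\{x^{n}/(ax;q)_{n}\}$, interchange the sums so $i$ runs over $0\leq i\leq n$, and identify the inner series as the stated ${}_{1}\Phi_{1}$ with deformation $uq^{-1}$ and argument $-u^{i}ay$. One small correction to your bookkeeping remarks: with $r=s=1$ the factor $\big[(-1)^{m}q^{\binom{m}{2}}\big]^{1+s-r}$ in the definition of ${}_{1}\Phi_{1}$ is \emph{not} trivial — it is precisely what the deformation parameter $uq^{-1}$ and the minus sign in $-u^{i}ay$ are there to absorb — but since you already planned to verify exactly this, the argument goes through as in the paper.
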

\begin{proof}
    \begin{align*}
        &\T(yD_{q}|u)\left\{\frac{x^n}{(ax;q)_{n}}\right\}\\
        &=\sum_{k=0}^{\infty}u^{\binom{k}{2}}\frac{y^k}{(q;q)_{k}}D_{q}^{k}\left\{\frac{x^n}{(ax;q)_{n}}\right\}\\
        &=\frac{(q;q)_{n}}{(ax;q)_{n}}\sum_{k=0}^{\infty}u^{\binom{k}{2}}\frac{y^k}{(q;q)_{k}}\sum_{i=0}^{k}\qbinom{k}{i}_{q}\frac{(q^n;q)_{k-i}(ax;q)_{i}a^{k-i}}{(q;q)_{n-i}(aq^nx;q)_{k}}x^{n-i}\\
        &=\frac{1}{(ax;q)_{n}}\sum_{i=0}^{n}\qbinom{n}{i}_{q}u^{\binom{i}{2}}\frac{(ax;q)_{i}}{(aq^{n}x;q)_{i}}x^{n-i}y^{i}\sum_{k=0}^{\infty}u^{\binom{k}{2}}\frac{(q^n;q)_{k}(u^iay)^{k}}{(q;q)_{k}(aq^{n+i}x;q)_{k}}\\
        &=\frac{1}{(ax;q)_{n}}\sum_{i=0}^{n}\qbinom{n}{i}_{q}u^{\binom{i}{2}}\frac{(ax;q)_{i}}{(aq^{n}x;q)_{i}}x^{n-i}y^{i}{}_{1}\Phi_{1}\left(
        \begin{array}{c}
             q^{n} \\
             aq^{n+i}x
        \end{array};
        q,uq^{-1},-u^iay
        \right).
    \end{align*}
\end{proof}

\begin{theorem}\label{theo_dexp_1Phi1}
For all $n\geq0$
   \begin{multline}
      \T(zD_{q}|v)\left\{\frac{1}{(ax;q)_{n}}{}_{1}\Phi_{1}\left(
        \begin{array}{c}
             q^{n} \\
             aq^{n}x
        \end{array};
        q,uq^{-1},-ay
        \right)\right\}\\
        =\frac{1}{(ax;q)_{n}}\sum_{k=0}^{\infty}u^{\binom{k}{2}}\frac{(q^n;q)_{k}(ay)^k}{(q;q)_{k}(axq^n;q)_{k}}{}_{1}\Phi_{1}\left(
        \begin{array}{c}
             q^{n+k}\\
             aq^{n+k}x
        \end{array};
        q,vq^{-1},-az
        \right).
    \end{multline}
\end{theorem}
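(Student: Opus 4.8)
The plan is to expand the outer operator $\T(zD_{q}|v)$ as a series in $D_{q}^{k}$ and apply it termwise to the function $f(x)=\frac{1}{(ax;q)_{n}}\,{}_{1}\Phi_{1}\!\left(\begin{smallmatrix}q^{n}\\ aq^{n}x\end{smallmatrix};q,uq^{-1},-ay\right)$. By Theorem \ref{theo_dexp_qpoch}, this function is itself $\T(yD_{q}|u)\left\{\frac{1}{(ax;q)_{n}}\right\}$, but rather than manipulate the composition of two deformed exponential operators directly, it is cleaner to note that the inner ${}_{1}\Phi_{1}$ is, by the proof of Theorem \ref{theo_dexp_qpoch}, literally the series $\sum_{j=0}^{\infty}u^{\binom{j}{2}}\frac{(q^{n};q)_{j}(ay)^{j}}{(q;q)_{j}(aq^{n}x;q)_{j}}$. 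So first I would write $f(x)=\sum_{j=0}^{\infty}u^{\binom{j}{2}}\frac{(q^{n};q)_{j}(ay)^{j}}{(q;q)_{j}}\cdot\frac{1}{(ax;q)_{n+j}}$, using $(ax;q)_{n}(aq^{n}x;q)_{j}=(ax;q)_{n+j}$ from Eq.(\ref{eqn_iden2}).

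Next I would apply $D_{q}^{k}$ to each summand. By Theorem \ref{theo_der_poch_reci} with the Pochhammer length $n+j$, we have $D_{q}^{k}\left\{\frac{1}{(ax;q)_{n+j}}\right\}=\frac{(q^{n+j};q)_{k}\,a^{k}}{(ax;q)_{n+j+k}}$. Then $\T(zD_{q}|v)\{f(x)\}=\sum_{j=0}^{\infty}u^{\binom{j}{2}}\frac{(q^{n};q)_{j}(ay)^{j}}{(q;q)_{j}}\sum_{k=0}^{\infty}v^{\binom{k}{2}}\frac{z^{k}}{(q;q)_{k}}\cdot\frac{(q^{n+j};q)_{k}\,a^{k}}{(ax;q)_{n+j+k}}$. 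The goal display has the opposite nesting order — an outer sum over $k$ of an inner ${}_{1}\Phi_{1}$ with top parameter $q^{n+k}$ — so the key combinatorial step is to interchange the two sums and re-index. After swapping, the inner sum over $j$ should reassemble into $\frac{1}{(ax;q)_{n+k}}\,{}_{1}\Phi_{1}\!\left(\begin{smallmatrix}q^{n+k}\\ aq^{n+k}x\end{smallmatrix};q,vq^{-1},-az\right)$; to see this one uses $(q^{n+j};q)_{k}(q^{n};q)_{j}=(q^{n};q)_{j+k}=(q^{n};q)_{k}(q^{n+k};q)_{j}$ (again Eq.(\ref{eqn_iden2})) and $(ax;q)_{n+j+k}=(ax;q)_{n+k}(aq^{n+k}x;q)_{j}$, which lets the $j$-dependent factors collapse exactly into the defining series of a ${}_{1}\Phi_{1}$ with argument $-ay$ and top parameter $q^{n+k}$ over $aq^{n+k}x$. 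Wait — I must be careful: the $j$-sum carries $u^{\binom{j}{2}}(ay)^{j}$ and the $k$-sum carries $v^{\binom{k}{2}}(az)^{k}$, so after the swap the outer variable is $k$ (with $v$ and $z$) and the inner variable is $j$ (with $u$ and $y$), which matches the right-hand side of the claim precisely, with the outer series being $\sum_{k}u^{\binom{k}{2}}\frac{(q^{n};q)_{k}(ay)^{k}}{(q;q)_{k}(axq^{n};q)_{k}}$ — note that in the stated RHS the outer bookkeeping uses $u$ on $\binom{k}{2}$ and $y$ in $(ay)^{k}$, so I would double-check the labeling and, if needed, keep the names $u,y$ on the outer sum and $v,z$ on the inner ${}_{1}\Phi_{1}$, consistent with how the theorem is written.

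The main obstacle is the double re-indexing: one has to pass from the pair of summation indices $(j,k)$ with summand depending on $(ax;q)_{n+j+k}$, $(q^{n};q)_{j}$, $(q^{n+j};q)_{k}$ to the pair $(k,j)$ with summand depending on $(ax;q)_{n+k+j}$, $(q^{n};q)_{k}$, $(q^{n+k};q)_{j}$, and verify that the symmetric identity $(q^{n};q)_{j}(q^{n+j};q)_{k}=(q^{n};q)_{j+k}=(q^{n};q)_{k}(q^{n+k};q)_{j}$ together with $(ax;q)_{n+j}(aq^{n+j}x;q)_{k}=(ax;q)_{n+j+k}=(ax;q)_{n+k}(aq^{n+k}x;q)_{j}$ makes the whole summand genuinely symmetric under $j\leftrightarrow k$ once the roles of $(u,y)$ and $(v,z)$ are also swapped. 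Since $0<|q|<1$ all series converge absolutely, so Fubini-type interchange of the order of summation is legitimate and needs no further justification. After the interchange, factoring out the $k$-dependent part $u^{\binom{k}{2}}\frac{(q^{n};q)_{k}(ay)^{k}}{(q;q)_{k}(aq^{n}x;q)_{k}}$ and recognizing the remaining $j$-series as the series expansion of $(ax;q)_{n+k}^{-1}\,{}_{1}\Phi_{1}\!\left(\begin{smallmatrix}q^{n+k}\\aq^{n+k}x\end{smallmatrix};q,vq^{-1},-az\right)$ — via the same computation as in the proof of Theorem \ref{theo_dexp_qpoch} but with $n$ replaced by $n+k$ and $(u,y)$ replaced by $(v,z)$ — completes the proof.
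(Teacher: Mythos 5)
Your proposal is correct and is essentially the paper's own argument: the paper likewise expands the inner ${}_{1}\Phi_{1}$ as $\sum_{k}u^{\binom{k}{2}}\frac{(q^{n};q)_{k}(ay)^{k}}{(q;q)_{k}(ax;q)_{n+k}}$, applies $\T(zD_{q}|v)$ termwise by linearity, and invokes Theorem \ref{theo_dexp_qpoch} with $n$ replaced by $n+k$ — which is exactly your use of Theorem \ref{theo_der_poch_reci} followed by the regroupings $(ax;q)_{n+j+k}=(ax;q)_{n+k}(aq^{n+k}x;q)_{j}$ and $(ax;q)_{n+k}=(ax;q)_{n}(aq^{n}x;q)_{k}$. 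The only superfluous element is the worry about interchanging the two sums: in your double series the index inherited from the ${}_{1}\Phi_{1}$ already carries $(u,y)$ on the outside and the operator index carries $(v,z)$ on the inside, so the nesting matches the stated right-hand side without any swap, as you eventually observe yourself.
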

\begin{proof}
From Theorem \ref{theo_dexp_qpoch}, we have
    \begin{align*}
        &\T(zD_{q}|v)\left\{\frac{1}{(ax;q)_{n}}{}_{1}\Phi_{1}\left(
        \begin{array}{c}
             q^{n} \\
             aq^{n}x
        \end{array};
        q,uq^{-1},-ay
        \right)\right\}\\
        &\hspace{3cm}=\T(zD_{q}|v)\left\{\sum_{k=0}^{\infty}u^{\binom{k}{2}}\frac{(q^n;q)_{k}}{(q;q)_{k}(ax;q)_{n+k}}(-ay)^k\right\}\\
        &\hspace{3cm}=\sum_{k=0}^{\infty}u^{\binom{k}{2}}\frac{(q^n;q)_{k}}{(q;q)_{k}}(-ay)^k\T(zD_{q}|v)\left\{\frac{1}{(ax;q)_{n+k}}\right\}\\
        &\hspace{3cm}=\frac{1}{(ax;q)_{n}}\sum_{k=0}^{\infty}u^{\binom{k}{2}}\frac{(q^n;q)_{k}(ay)^k}{(q;q)_{k}(axq^n;q)_{k}}{}_{1}\Phi_{1}\left(
        \begin{array}{c}
             q^{n+k}\\
             aq^{n+k}x
        \end{array};
        q,vq^{-1},-az
        \right)
    \end{align*}
as claimed.
\end{proof}

\begin{theorem}
   \begin{multline*}
      \T(yD_{q}|v)\left\{{}_{r}\Phi_{s}\left(
     \begin{array}{c}
         a_{1},\ldots,a_{r} \\
        b_{1}x,b_{2},\ldots,b_{s}
        \end{array};
       q,u,z
        \right)\right\}\\=\sum_{n=0}^{\infty}u^{\binom{n}{2}}\frac{(a_{1},a_{2},\ldots,a_{r};q)_{n}}{(b_{1}x,b_{2},\ldots,b_{s};q)_{n}(q;q)_{n}}\\
        {}_{1}\Phi_{1}\left(
        \begin{array}{c}
             q^{n} \\
             q^{n}b_{1}x
        \end{array};
        q,vq^{-1},-b_{1}y
        \right)\bigg((-1)^nq^{\binom{n}{2}}\bigg)^{1+s-r}z^n.
    \end{multline*}
\end{theorem}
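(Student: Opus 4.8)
The plan is to expand the inner series in the variable $x$, isolate the unique factor that depends on $x$, and then apply the operator $\T(yD_{q}|v)$ term by term, so that everything collapses to a single instance of Theorem~\ref{theo_dexp_qpoch}.

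First I would write, directly from Eq.(\ref{eqn_def_hyp}),
\[
{}_{r}\Phi_{s}\left(\begin{array}{c} a_{1},\ldots,a_{r} \\ b_{1}x,b_{2},\ldots,b_{s}\end{array};q,u,z\right)
=\sum_{n=0}^{\infty}u^{\binom{n}{2}}\frac{(a_{1},\ldots,a_{r};q)_{n}}{(b_{2},\ldots,b_{s};q)_{n}(q;q)_{n}}\bigl[(-1)^{n}q^{\binom{n}{2}}\bigr]^{1+s-r}z^{n}\cdot\frac{1}{(b_{1}x;q)_{n}},
\]
noting that among all the $q$-shifted factorials only $1/(b_{1}x;q)_{n}$ involves $x$, while the coefficients $u^{\binom{n}{2}}\frac{(a_{1},\ldots,a_{r};q)_{n}}{(b_{2},\ldots,b_{s};q)_{n}(q;q)_{n}}[(-1)^{n}q^{\binom{n}{2}}]^{1+s-r}z^{n}$ are constants with respect to $x$. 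Since $\T(yD_{q}|v)$ is a formal linear combination of the operators $D_{q}^{k}$ in $x$, I would move it inside the sum over $n$, reducing the problem to evaluating $\T(yD_{q}|v)\{1/(b_{1}x;q)_{n}\}$ for each $n$.

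Next I would apply Theorem~\ref{theo_dexp_qpoch} with $a=b_{1}$ and with its operator parameter specialized from $u$ to $v$, which gives $\T(yD_{q}|v)\{1/(b_{1}x;q)_{n}\}=\frac{1}{(b_{1}x;q)_{n}}\,{}_{1}\Phi_{1}\bigl(q^{n};\,b_{1}q^{n}x;\,q,vq^{-1},-b_{1}y\bigr)$. Substituting this back into the sum and recombining $\frac{1}{(b_{1}x;q)_{n}}$ with $\frac{1}{(b_{2},\ldots,b_{s};q)_{n}}$ into $\frac{1}{(b_{1}x,b_{2},\ldots,b_{s};q)_{n}}$ produces exactly the right-hand side of the statement; the remaining factors $(a_{1},\ldots,a_{r};q)_{n}$, $(q;q)_{n}$, $u^{\binom{n}{2}}$, $[(-1)^{n}q^{\binom{n}{2}}]^{1+s-r}$ and $z^{n}$ are carried through unchanged.

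The only delicate point is the interchange of the two infinite summations — the one defining $\T(yD_{q}|v)$ and the one defining ${}_{r}\Phi_{s}$ — which is exactly the step handled in the proof of Theorem~\ref{theo_dexp_1Phi1}; I would treat it at the level of formal power series, or, if an analytic statement is wanted, invoke $0<\vert q\vert<1$ to secure absolute convergence on a suitable neighbourhood in $x,y,z$. Once that interchange is granted, the argument is a one-line substitution and presents no further obstacle.
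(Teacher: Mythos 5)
Your proposal is correct and follows essentially the same route as the paper: expand the series, use linearity to move $\T(yD_{q}|v)$ onto the only $x$-dependent factor $1/(b_{1}x;q)_{n}$, and apply Theorem~\ref{theo_dexp_qpoch} with $a=b_{1}$ and deformation parameter $v$. Your extra remark about justifying the interchange of summations is a reasonable addition, but otherwise the argument matches the paper's proof.
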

\begin{proof}
    \begin{align*}
        &\T(yD_{q}|v)\left\{{}_{r}\Phi_{s}\left(
     \begin{array}{c}
         a_{1},a_{2},\ldots,a_{r} \\
        b_{1}x,b_{2},\ldots,b_{s}
        \end{array};
       q,u,z
        \right)\right\}\\
        &=\T(yD_{q}|v)\left\{\sum_{n=0}^{\infty}u^{\binom{n}{2}}\frac{(a_{1},a_{2},\ldots,a_{r};q)_{n}}{(b_{1}x,b_{2},\ldots,b_{s};q)_{n}(q;q)_{n}}\bigg((-1)^nq^{\binom{n}{2}}\bigg)^{1+s-r}z^n\right\}\\
        &=\sum_{n=0}^{\infty}u^{\binom{n}{2}}\T(yD_{q}|v)\left\{\frac{1}{(b_{1}x;q)_{n}}\right\}\frac{(a_{1},a_{2},\ldots,a_{r};q)_{n}}{(b_{2},\ldots,b_{s};q)_{n}(q;q)_{n}}\bigg((-1)^nq^{\binom{n}{2}}\bigg)^{1+s-r}z^n\\
        &=\sum_{n=0}^{\infty}u^{\binom{n}{2}}\frac{(a_{1},a_{2},\ldots,a_{r};q)_{n}}{(b_{1}x,b_{2},\ldots,b_{s};q)_{n}(q;q)_{n}}\\
        &\hspace{3cm}{}_{1}\Phi_{1}\left(
        \begin{array}{c}
             q^{n}\\
             q^{n}b_{1}x
        \end{array};
        q,vq^{-1},-b_{1}y
        \right)\bigg((-1)^nq^{\binom{n}{2}}\bigg)^{1+s-r}z^n.
    \end{align*}
\end{proof}

\section{New $q$-identities from old one}

\subsection{$q$-identity from the $q$-Gauss sum}

The $q$-Gauss sum is given by [Gasper]:
\begin{equation}\label{eqn_qgauss}
    {}_{2}\phi_{1}\left(\begin{array}{c}
         a,b\\
         c 
    \end{array};q,c/ab\right)=\frac{(c/a,c/b;q)_{\infty}}{(c,c/ab;q)_{\infty}}.
\end{equation}

\begin{theorem}
    \begin{multline*}
        \frac{(q;q)_{k}}{(1-q)(c;q)_{k}}\sum_{i=0}^{k}\qbinom{k}{i}_{q}\frac{(1-aq^i)(1-bq^i)(c;q)_{i}(a;q)_{i}(b;q)_{i}}{(1-cq^{k+1})(cq^k;q)_{i}(q;q)_{i}}\left(\frac{1}{ab}\right)^n\\
        \hspace{3cm}\times{}_{4}\phi_{3}\left(\begin{array}{c}
         aq^{i+1},bq^{i+1},q^{k+1},q\\
         cq^{k+i+1},q^{i+1},q^2 
        \end{array};q,c/ab\right)\\
        =\frac{(ax,bx;q)_{\infty}}{(cx,dx;q)_{\infty}}\sum_{i=0}^{k}\qbinom{k}{i}_{q}\frac{q^{\binom{i}{2}}(cx,dx;q)_{i}}{(bx;q)_{i}}\sum_{j=0}^{i}\qbinom{i}{j}_{q}q^{j(j-i)}\frac{(-a)^j(-b)^{i-j}}{(ax;q)_{j}}\\
        \times\sum_{l=0}^{k-i}\qbinom{k-i}{l}_{q}(dq^ix;q)_{l}c^ld^{k-i-l}.
    \end{multline*}
\end{theorem}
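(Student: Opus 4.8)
The plan is to produce the identity by applying $D_q^{k}$ to both sides of the $q$-Gauss sum Eq.(\ref{eqn_qgauss}), following the recipe announced in the Introduction. First I would recast the closed form so that it has the shape $\tfrac{(ax,bx;q)_\infty}{(cx,dx;q)_\infty}$: replacing the lower parameter $c$ by $cx$ turns Eq.(\ref{eqn_qgauss}) into
\[
\sum_{n=0}^{\infty}\frac{(a;q)_{n}(b;q)_{n}}{(q;q)_{n}}\Big(\frac{c}{ab}\Big)^{n}\frac{x^{n}}{(cx;q)_{n}}=\frac{(cx/a,\,cx/b;q)_{\infty}}{(cx,\,cx/ab;q)_{\infty}},
\]
whose right-hand side is exactly $\tfrac{(ax,bx;q)_\infty}{(cx,dx;q)_\infty}$ after the four constants are renamed (they are tied by the single relation $cd=ab$). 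Now apply $D_q^{k}$ in the variable $x$ to both sides. On the right, $D_q^{k}\big\{\tfrac{(ax,bx;q)_\infty}{(cx,dx;q)_\infty}\big\}$ is precisely the triple sum obtained as the $n\to\infty$ form of the last theorem of Section~2, and this reproduces the right-hand side of the claim verbatim.

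For the left-hand side I would interchange $D_q^{k}$ with the sum over $n$ (legitimate since $D_q$ merely samples $x$ and $qx$, and both series converge for $|c/ab|<1$), and then substitute the evaluation of $D_q^{k}\{x^{n}/(cx;q)_{n}\}$ established in Section~2. This leaves a double sum, over $n\ge0$ and over $0\le i\le k$. Swapping the order of summation (the $i$-sum is finite), I would shift $n\mapsto n+i$ and repeatedly use Eqs.(\ref{eqn_iden1})--(\ref{eqn_iden6}): first $(cx;q)_{n}(cq^{n}x;q)_{k}=(cx;q)_{n+k}$, then $(cx;q)_{i}/(cx;q)_{n+k}=1/(cq^{i}x;q)_{n+k-i}$ and $(cq^{i}x;q)_{n+k}=(cq^{i}x;q)_{k}(cq^{i+k}x;q)_{n}$, together with $(a;q)_{n+i}=(a;q)_{i}(aq^{i};q)_{n}$, $(b;q)_{n+i}=(b;q)_{i}(bq^{i};q)_{n}$ and $(q^{n+i};q)_{k-i}=(q^{i};q)_{k-i}\,(q^{k};q)_{n}/(q^{i};q)_{n}$ (plus the two binomial-coefficient identities). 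After this collection the residual sum over $n$ is a basic hypergeometric series with numerator parameters $aq^{i},bq^{i},q^{k}$, denominator parameters $q^{i},cq^{i+k}x$ and argument $cx/ab$, which the statement records — after a unit shift of the summation index, absorbing $\tfrac{(1-aq^{i})(1-bq^{i})(1-q^{k})}{(1-q^{i})(1-q)(1-cq^{i+k}x)}\cdot\tfrac{cx}{ab}$ into the $i$-prefactor — as the ${}_{4}\phi_{3}$ displayed. Finally $(1-aq^{i})(a;q)_{i}=(a;q)_{i+1}$ and $(1-bq^{i})(b;q)_{i}=(b;q)_{i+1}$ put the prefactor in the stated form.

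The step I expect to be the main obstacle is exactly this last collection and reindexing. One has to be careful with the two summation ranges: $D_q^{i}\{x^{n}\}$ vanishes for $i>n$, and $(q^{n+i};q)_{k-i}$ vanishes at $n=0$ precisely when $i=0$, so the effective lower endpoint of the inner $n$-sum before the shift $n\mapsto n+i$ (and before any further unit shift) must be pinned down before anything is moved, on pain of losing or double-counting boundary terms. Once those endpoints are settled, the rest is a routine, if lengthy, chain of applications of Eqs.(\ref{eqn_iden1})--(\ref{eqn_iden6}).
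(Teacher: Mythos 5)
Your strategy is the same one the paper uses: apply the $k$-th $q$-derivative in the parameter $c$ (you do it via $c\mapsto cx$ and $D_{x,q}$, which is equivalent) to both sides of the $q$-Gauss sum, evaluate the product side by the $n\to\infty$ formula of Section~2 and the series side termwise by the formula for $D_q^{k}\{x^{n}/(cx;q)_{n}\}$, then swap, shift and collect. Your two clarifications are genuinely better than the paper's write-up (the printed right-hand side only makes sense after the renaming $a\mapsto c/a$, $b\mapsto c/b$, $c\mapsto c$, $d\mapsto c/ab$ with $cd=ab$ that you spell out, and you correctly identify the endpoint bookkeeping as the danger). But you do not actually resolve that danger, and it is exactly where the plan fails. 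After the shift $n\mapsto n+i$ the branch-$i$ inner sum is
\begin{equation*}
S_{i}=\sum_{n\ge0}\frac{(aq^{i},bq^{i};q)_{n}}{(q;q)_{n}\,(cq^{k+i}x;q)_{n}}\,(q^{n+i};q)_{k-i}\Big(\frac{cx}{ab}\Big)^{n},
\end{equation*}
whose $n=0$ term equals $(q^{i};q)_{k-i}$, which is nonzero for every $i\ge1$ (it is $1$ at $i=k$) and vanishes only at $i=0$. Hence the further unit shift that manufactures the upper parameters $q^{k+1},q$ and lower parameters $q^{i+1},q^{2}$ of the displayed ${}_{4}\phi_{3}$ is admissible only in the $i=0$ branch; for $i\ge1$ it discards a nonzero term. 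Moreover the factorization $(q^{n+i};q)_{k-i}=(q^{i};q)_{k-i}(q^{k};q)_{n}/(q^{i};q)_{n}$ you invoke is degenerate precisely at $i=0$ (it reads $0/0$, and puts $(1;q)_{n}$ in a denominator). Carried out honestly, your computation leaves the $i\ge1$ branches as unshifted ${}_{3}\phi_{2}$-type sums with numerator parameters $aq^{i},bq^{i},q^{k}$ and denominator parameters $q^{i},cq^{k+i}x$, and gives the $i=0$ branch an extra factor $cx/ab$ -- not the stated expression.

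This is not something more care would repair, because the displayed formula is not what the method yields; the paper's own proof commits exactly the step you were trying to avoid (it rewrites $(q^{n+i};q)_{k-i}=(q^{n};q)_{k}/(q^{n};q)_{i}$ and drops all $n=0$ terms on the grounds that $(q^{0};q)_{m}=0$, ignoring that the denominator $(q^{0};q)_{i}$ vanishes too, and it also keeps $(c/ab)^{n}$ instead of $(c/ab)^{n+1}$ after the shift). A $k=1$ check makes the discrepancy concrete: from $D_{c,q}\{c^{n}/(c;q)_{n}\}=(1-q^{n})c^{n-1}/(c;q)_{n+1}$ one gets
\begin{equation*}
D_{c,q}\,{}_{2}\phi_{1}(a,b;c;q,c/ab)=\frac{(1-a)(1-b)}{ab\,(1-c)(1-cq)}\;{}_{2}\phi_{1}(aq,bq;cq^{2};q,c/ab),
\end{equation*}
a single ${}_{2}\phi_{1}$, whereas the stated left-hand side at $k=1$ is a sum of two generically nonzero terms and, even in its $i=0$ term, lacks the factor $1/ab$. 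So your proof cannot terminate at the statement as printed; at best it proves a corrected identity in which the $i\ge1$ inner series are the unshifted ones above. (The statement also has typos -- the exponent in $(1/ab)^{n}$ should be $i$, and the right-hand letters must be the renamed ones you describe -- but fixing those does not restore the missing boundary terms.)
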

\begin{proof}
On the one hand,    
    \begin{align*}
        &D_{c,q}^{k}\left\{{}_{2}\phi_{1}\left(\begin{array}{c}
         a,b\\
         c 
        \end{array};q,c/ab\right)\right\}\\
        &\hspace{1cm}=\sum_{n=0}^{\infty}\frac{(a;q)_{n}(b;q)_{n}}{(q;q)_{n}}\left(\frac{1}{ab}\right)^nD_{q}^{k}\left\{\frac{c^n}{(c;q)_{n}}\right\}\\
        &\hspace{1cm}=\sum_{n=0}^{\infty}\frac{(a;q)_{n}(b;q)_{n}}{(q;q)_{n}}\left(\frac{1}{ab}\right)^n\frac{(q;q)_{n}}{(c;q)_{k}(cq^k;q)_{n}}\sum_{i=0}^{k}\qbinom{k}{i}_{q}\frac{(q^n;q)_{k-i}(c;q)_{i}}{(q;q)_{n-i}}c^{n-i}\\
        &\hspace{1cm}=\frac{1}{(c;q)_{k}}\sum_{i=0}^{k}\qbinom{k}{i}_{q}(c;q)_{i}c^{-i}\sum_{n=i}^{\infty}\frac{(a;q)_{n}(b;q)_{n}}{(cq^k;q)_{n}}\frac{(q^n;q)_{k-i}}{(q;q)_{n-i}}\left(\frac{c}{ab}\right)^n\\
        &\hspace{1cm}=\frac{1}{(c;q)_{k}}\sum_{i=0}^{k}\qbinom{k}{i}_{q}(c;q)_{i}c^{-i}\sum_{n=0}^{\infty}\frac{(a;q)_{n+i}(b;q)_{n+i}}{(cq^k;q)_{n+i}}\frac{(q^{n+i};q)_{k-i}}{(q;q)_{n}}\left(\frac{c}{ab}\right)^{n+i}\\
        &=\frac{1}{(c;q)_{k}}\sum_{i=0}^{k}\qbinom{k}{i}_{q}\frac{(c;q)_{i}(a;q)_{i}(b;q)_{i}}{(cq^k;q)_{i}}\left(\frac{1}{ab}\right)^n\sum_{n=0}^{\infty}\frac{(aq^i;q)_{n}(bq^i;q)_{n}}{(cq^{k+i};q)_{n}}\frac{(q^{n+i};q)_{k-i}}{(q;q)_{n}}\left(\frac{c}{ab}\right)^{n}.
    \end{align*}
As $(q^{n+i};q)_{k-i}=\frac{(q^{n};q)_{k}}{(q^n;q)_{i}}$ and $(q^0;q)_{m}=0$, then using Eq.(\ref{eqn_iden3}) we have
    \begin{align*}
        &D_{c,q}^{k}\left\{{}_{2}\phi_{1}\left(\begin{array}{c}
         a,b\\
         c 
        \end{array};q,c/ab\right)\right\}\\
        &=\frac{1}{(c;q)_{k}}\sum_{i=0}^{k}\qbinom{k}{i}_{q}\frac{(c;q)_{i}(a;q)_{i}(b;q)_{i}}{(cq^k;q)_{i}}\left(\frac{1}{ab}\right)^n\sum_{n=0}^{\infty}\frac{(aq^i;q)_{n+1}(bq^i;q)_{n+1}}{(cq^{k+i};q)_{n+1}(q;q)_{n+1}}\frac{(q^{n+1};q)_{k}}{(q^{n+1};q)_{i}}\left(\frac{c}{ab}\right)^{n}\\
        &=\frac{(q;q)_{k}}{(1-q)(c;q)_{k}}\sum_{i=0}^{k}\qbinom{k}{i}_{q}\frac{(1-aq^i)(1-bq^i)(c;q)_{i}(a;q)_{i}(b;q)_{i}}{(1-cq^{k+1})(cq^k;q)_{i}(q;q)_{i}}\left(\frac{1}{ab}\right)^n\\
        &\hspace{7cm}\times\sum_{n=0}^{\infty}\frac{(aq^{i+1};q)_{n}(bq^{i+1};q)_{n}}{(cq^{k+i+1};q)_{n}(q^2;q)_{n}}\frac{(q^{k+1};q)_{n}}{(q^{i+1};q)_{n}}\left(\frac{c}{ab}\right)^{n}\\
        &=\frac{(q;q)_{k}}{(1-q)(c;q)_{k}}\sum_{i=0}^{k}\qbinom{k}{i}_{q}\frac{(1-aq^i)(1-bq^i)(c;q)_{i}(a;q)_{i}(b;q)_{i}}{(1-cq^{k+1})(cq^k;q)_{i}(q;q)_{i}}\left(\frac{1}{ab}\right)^n\\
        &\hspace{7cm}\times{}_{4}\phi_{3}\left(\begin{array}{c}
         aq^{i+1},bq^{i+1},q^{k+1},q\\
         cq^{k+i+1},q^{i+1},q^2 
        \end{array};q,c/ab\right).
    \end{align*}
On the other hand,
    \begin{multline*}
        D_{q}^{k}\left\{\frac{(ax,bx;q)_{\infty}}{(cx,dx;q)_{\infty}}\right\}\\
        =\frac{(ax,bx;q)_{\infty}}{(cx,dx;q)_{\infty}}\sum_{i=0}^{k}\qbinom{k}{i}_{q}\frac{q^{\binom{i}{2}}(cx,dx;q)_{i}}{(bx;q)_{i}}\sum_{j=0}^{i}\qbinom{i}{j}_{q}q^{j(j-i)}\frac{(-a)^j(-b)^{i-j}}{(ax;q)_{j}}\\
        \times\sum_{l=0}^{k-i}\qbinom{k-i}{l}_{q}(dq^ix;q)_{l}c^ld^{k-i-l}.
    \end{multline*}
\end{proof}

\subsection{$q$-identities from $q$-Chu-Vandermonde's sum}
The $q$-Chu-Vandermonde's sum \cite{gasper} is given by:
\begin{equation}\label{eqn_qchu}
    {}_{2}\phi_{1}(q^{-n},a;c;q,q)=\frac{(c/a;q)_{n}}{(c;q)_{n}}a^n.
\end{equation}

\begin{theorem}
For $k\leq n$,
\begin{multline*}
    {}_{3}\phi_{2}\left(
        \begin{array}{c}
             q^{-n+1},aq,q^{k+1}\\
             cq^{k+1},q^{2}
        \end{array};
        q,q
        \right)\\
        =a^n\frac{(1-q)(q;q)_{n}(c;q)_{k+1}(c/a;q)_{n}}{q(q;q)_{k}(c;q)_{n}(q^nc;q)_{k}}\sum_{i=0}^{k}\qbinom{k}{i}_{q}q^{\binom{i}{2}}(-1/a)^i\frac{(c;q)_{i}(q^{n};q)_{k-i}}{(q;q)_{n-i}(c/a;q)_{i}}.
\end{multline*}
\end{theorem}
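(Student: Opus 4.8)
The plan is to apply the $q$-differential operator $D_{c,q}^{k}$, acting on the bottom parameter $c$, to both sides of the $q$-Chu--Vandermonde identity Eq.(\ref{eqn_qchu}) and then to read off the two sides. Both members are rational in $c$ (the left-hand side is a finite $c$-series), so term-by-term $q$-differentiation is legitimate, and the hypothesis $k\le n$ is precisely what is needed for Theorem \ref{theo_der_poch/poch} to apply on the right-hand side.

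For the left-hand side I would expand ${}_{2}\phi_{1}(q^{-n},a;c;q,q)=\sum_{m=0}^{n}\frac{(q^{-n};q)_{m}(a;q)_{m}}{(q;q)_{m}(c;q)_{m}}q^{m}$ and differentiate each summand with Theorem \ref{theo_der_poch_reci} in the form $D_{c,q}^{k}\{1/(c;q)_{m}\}=(q^{m};q)_{k}/(c;q)_{k+m}$. Since $(q^{0};q)_{k}=(1;q)_{k}=0$, the $m=0$ term vanishes, so I shift $m\mapsto m+1$; then I peel off $(q^{-n};q)_{m+1}=(1-q^{-n})(q^{-n+1};q)_{m}$, $(a;q)_{m+1}=(1-a)(aq;q)_{m}$, $(q;q)_{m+1}=(1-q)(q^{2};q)_{m}$ and $(c;q)_{k+m+1}=(c;q)_{k+1}(cq^{k+1};q)_{m}$ by Eq.(\ref{eqn_iden2}), and rewrite $(q^{m+1};q)_{k}=\frac{(q;q)_{k}}{(q;q)_{m}}(q^{k+1};q)_{m}$. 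Collecting everything, what remains is a ${}_{3}\phi_{2}$ with upper parameters $q^{-n+1},aq,q^{k+1}$ and lower parameters $cq^{k+1},q^{2}$, argument $q$, multiplied by an explicit scalar built from the peeled linear factors and from $(q;q)_{k}/(c;q)_{k+1}$.

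For the right-hand side there is essentially nothing to do: applying Theorem \ref{theo_der_poch/poch} to $a^{n}(c/a;q)_{n}/(c;q)_{n}$ with numerator slope $1/a$, denominator slope $1$ and $x=c$ reproduces verbatim the finite sum $a^{n}\frac{(c/a;q)_{n}(q;q)_{n}}{(c;q)_{n}(q^{n}c;q)_{k}}\sum_{i=0}^{k}\qbinom{k}{i}_{q}q^{\binom{i}{2}}(-1/a)^{i}\frac{(c;q)_{i}(q^{n};q)_{k-i}}{(q;q)_{n-i}(c/a;q)_{i}}$ that appears in the statement (using $(c/a,q;q)_{n}=(c/a;q)_{n}(q;q)_{n}$).

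Equating the two evaluations of $D_{c,q}^{k}$ and dividing through by the scalar prefactor produced in the left-hand computation then isolates the ${}_{3}\phi_{2}$ and gives the asserted identity after simplifying the constant. The \emph{main obstacle} is purely the bookkeeping in the left-hand reduction: one has to track every power of $q$ and every $(q;q)$-ratio correctly through the index shift $m\mapsto m+1$, convert $(q^{m+1};q)_{k}$ into the lower parameter $q^{k+1}$ without sign or exponent slips, and merge the several leftover linear factors into a single clean scalar; the identities Eq.(\ref{eqn_iden1})--Eq.(\ref{eqn_iden6}) handle everything else routinely.
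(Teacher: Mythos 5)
Your plan is exactly the paper's: apply $D_{c,q}^{k}$ to both sides of Eq.(\ref{eqn_qchu}), treating the left side by term-by-term use of Theorem \ref{theo_der_poch_reci}, the vanishing of the $m=0$ term, the shift $m\mapsto m+1$ and the peeling of linear factors (this is precisely the Section-3 theorem on $D_{c_1,q}^{k}\{{}_{r}\Phi_{s}\}$ specialized to $r=2$, $s=1$, $u=1$, $z=q$), and treating the right side by Theorem \ref{theo_der_poch/poch} with $x=c$, numerator slope $1/a$, denominator slope $1$. Methodologically there is nothing different to compare.

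The gap is in your last step, the claim that equating the two evaluations and dividing by the left-hand prefactor ``gives the asserted identity after simplifying the constant.'' If you carry out your own reduction correctly, the peeled factors $(1-q^{-n})$ and $(1-a)$ do \emph{not} disappear: you get
\begin{equation*}
D_{c,q}^{k}\left\{{}_{2}\phi_{1}\left(\begin{array}{c} q^{-n},a\\ c\end{array};q,q\right)\right\}
=\frac{(1-q^{-n})(1-a)\,q\,(q;q)_{k}}{(1-q)\,(c;q)_{k+1}}\;
{}_{3}\phi_{2}\left(\begin{array}{c} q^{-n+1},aq,q^{k+1}\\ cq^{k+1},q^{2}\end{array};q,q\right),
\end{equation*}
so after dividing through, the right-hand side of the resulting identity carries an extra factor $1/\big((1-q^{-n})(1-a)\big)$ relative to the printed statement. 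The paper's own proof silently drops $(1-q^{-n})(1-a)$ in its ``on the one hand'' display, and the printed constant is in fact off by exactly this factor: at $n=k=1$ the upper parameter $q^{-n+1}=1$ makes the ${}_{3}\phi_{2}$ collapse to $1$, while the stated right-hand side evaluates to $(1-q)(a-1)/q$. So the bookkeeping you defer cannot be made to produce the asserted constant; a correct execution of your outline proves the corrected identity (with the additional factor), and your write-up should either state that constant or flag the misprint rather than assert that the constants match.
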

\begin{proof}
On the one hand,
    \begin{align*}
        D_{c,q}^{k}\left\{{}_{2}\phi_{1}\left(
        \begin{array}{c}
             q^{-n},a \\
             c
        \end{array};
        q,q
        \right)\right\}
        =\frac{(q;q)_{k}q}{(c;q)_{k+1}(1-q)}
        {}_{3}\phi_{2}\left(
        \begin{array}{c}
             q^{-n+1},aq,q^{k+1} \\
             cq^{k+1},q^2
        \end{array};
        q,q
        \right).
    \end{align*}
On the other hand,
    \begin{equation*}
        D_{c,q}^{k}\left\{\frac{(c/a;q)_{n}}{(c;q)_{n}}\right\}=\frac{(c/a,q;q)_{n}}{(c;q)_{n}(q^nc;q)_{k}}\sum_{i=0}^{k}\qbinom{k}{i}_{q}q^{\binom{i}{2}}(-1/a)^i\frac{(c;q)_{i}(q^{n};q)_{k-i}}{(q;q)_{n-i}(c/a;q)_{i}}.
    \end{equation*}
    
\end{proof}
Set $a=1$ in the above theorem. Then for all $c\in\R$,
\begin{equation*}
    {}_{3}\phi_{2}\left(
        \begin{array}{c}
             q^{-n+1},q,q^{k+1} \\
             cq^{k+1},q^2
        \end{array};
        q,q
        \right)=0
\end{equation*}

\begin{theorem}
    \begin{multline*}
        \sum_{k=0}^{n}\frac{(q^{-n},a;q)_{k}}{(c;q)_{k}(q;q)_{k}}
        {}_{1}\Phi_{1}\left(
        \begin{array}{c}
             q^{k} \\
             q^{k}c
        \end{array};
        q,uq^{-1},-y
        \right)q^k\\
        =\frac{(c/a;q)_{n}}{(c;q)_{n}}\sum_{i=0}^{n}(uq)^{\binom{i}{2}}\qbinom{n}{i}_{q}\frac{(c;q)_{i}(-a^{-1}y)^i}{(c/a;q)_{i}(q^nc;q)_{i}}{}_{1}\Phi_{1}\left(
     \begin{array}{c}
         q^n \\
        q^{n+i}c
        \end{array};
       q,uq^{-1},-u^iy
        \right).
    \end{multline*}
\end{theorem}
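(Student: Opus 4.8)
The plan is to apply the $u$-deformed $q$-exponential operator $\T(yD_{q}|u)$, acting in the parameter $c$, to both sides of the $q$-Chu--Vandermonde sum Eq.(\ref{eqn_qchu}), and then to recognize each side by means of the operator lemmas established above.

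First I would handle the left-hand side. Because $(q^{-n};q)_{k}=0$ for $k>n$, the series ${}_{2}\phi_{1}(q^{-n},a;c;q,q)=\sum_{k=0}^{n}\frac{(q^{-n},a;q)_{k}}{(c;q)_{k}(q;q)_{k}}q^{k}$ is actually a finite sum, so $\T(yD_{q}|u)$ may be moved inside it term by term purely by linearity, with no convergence issue. Since the coefficients $(q^{-n};q)_{k}$, $(a;q)_{k}$, $(q;q)_{k}$, and $q^{k}$ do not depend on $c$, the only evaluation needed is $\T(yD_{q}|u)\left\{1/(c;q)_{k}\right\}$, and Theorem~\ref{theo_dexp_qpoch}, applied with $a=1$, $x=c$, and $n$ replaced by $k$, gives $\T(yD_{q}|u)\left\{1/(c;q)_{k}\right\}=\frac{1}{(c;q)_{k}}\,{}_{1}\Phi_{1}\left(\begin{array}{c} q^{k} \\ q^{k}c \end{array};q,uq^{-1},-y\right)$. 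Summing over $k$ reproduces exactly the left-hand side of the claimed identity.

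Next, for the right-hand side I would apply $\T(yD_{q}|u)$ to $\frac{(c/a;q)_{n}}{(c;q)_{n}}a^{n}$; the factor $a^{n}$ is constant in $c$ and leaves the operator untouched. For the remaining quotient I invoke the earlier theorem evaluating $\T(yD_{q}|u)\bigl\{(ax;q)_{n}/(bx;q)_{n}\bigr\}$, specialized by $x\mapsto c$, $a\mapsto 1/a$, and $b\mapsto 1$; under this substitution $ax\mapsto c/a$, $bx\mapsto c$, $bq^{n+i}x\mapsto q^{n+i}c$, $-ay\mapsto -a^{-1}y$, and $-u^{i}by\mapsto -u^{i}y$, so one obtains the sum over $i$ of $(uq)^{\binom{i}{2}}\qbinom{n}{i}_{q}\frac{(c;q)_{i}(-a^{-1}y)^{i}}{(c/a;q)_{i}(q^{n}c;q)_{i}}\,{}_{1}\Phi_{1}\left(\begin{array}{c} q^{n} \\ q^{n+i}c \end{array};q,uq^{-1},-u^{i}y\right)$, times the prefactor $\frac{(c/a;q)_{n}}{(c;q)_{n}}$ (and carrying the constant $a^{n}$ along). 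Equating the two evaluations of $\T(yD_{q}|u)$ applied to Eq.(\ref{eqn_qchu}) and tidying up yields the stated identity.

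Each step is individually routine; the only real work is the bookkeeping --- matching parameters correctly in the two cited operator lemmas and tracking the powers of $u$, $q$, and $a$ --- together with the trivial observation that the $k$-sum on the left terminates, so interchanging it with $\T(yD_{q}|u)$ needs nothing beyond linearity. One should also assume $a\neq 0$, so that $c/a$ and $-a^{-1}y$ are well defined.
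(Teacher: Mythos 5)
Your proposal is correct and follows exactly the paper's own route: apply $\T(yD_{c,q}|u)$ to both sides of the $q$-Chu--Vandermonde sum, evaluate the left side termwise via Theorem~\ref{theo_dexp_qpoch} (with $a=1$, $x=c$, $n\mapsto k$), and evaluate the right side via the operator theorem for $(ax;q)_{n}/(bx;q)_{n}$ specialized to $x=c$, $a\mapsto 1/a$, $b\mapsto 1$. One remark: carrying the constant $a^{n}$ along, as you correctly insist on doing, shows the right-hand side of the displayed identity should carry a factor $a^{n}$, which the paper's statement (and its proof, which applies the operator only to $(c/a;q)_{n}/(c;q)_{n}$) silently omits.
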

\begin{proof}
On the one hand,
    \begin{align*}
      &\T(yD_{c,q}|u)\left\{{}_{2}\phi_{1}\left(
     \begin{array}{c}
         q^{-n},a \\
        c
        \end{array};
       q,q
        \right)\right\}\\
        &\hspace{1cm}=\sum_{k=0}^{n}\frac{(q^{-n},a;q)_{k}}{(q;q)_{k}}\T(yD_{c,q}|u)\left\{\frac{1}{(c;q)_{k}}\right\}
        q^k\\
        &\hspace{1cm}=\sum_{k=0}^{n}\frac{(q^{-n},a;q)_{k}}{(c;q)_{k}(q;q)_{k}}
        {}_{1}\Phi_{1}\left(
        \begin{array}{c}
             q^{k} \\
             q^{k}c
        \end{array};
        q,uq^{-1},-y
        \right)q^k
    \end{align*}
On the other hand,
    \begin{multline}
        \T(yD_{c,q}|u)\left\{\frac{(c/a;q)_{n}}{(c;q)_{n}}\right\}\\
        =\frac{(c/a;q)_{n}}{(c;q)_{n}}\sum_{i=0}^{n}(uq)^{\binom{i}{2}}\qbinom{n}{i}_{q}\frac{(c;q)_{i}(-a^{-1}y)^i}{(c/a;q)_{i}(q^nc;q)_{i}}{}_{1}\Phi_{1}\left(
     \begin{array}{c}
         q^n \\
        q^{n+i}c
        \end{array};
       q,uq^{-1},-u^iy
        \right).
    \end{multline}    
\end{proof}

\subsection{$q$-identity from Jackson's transformation formula}

Jackson's transformation of ${}_{2}\phi_{1}$-series is
\begin{equation}\label{eqn_jackson}
    {}_{2}\phi_{1}\left(\begin{array}{c}
         a,b\\
         c 
    \end{array};q,z\right)=\frac{(az;q)_{\infty}}{(z;q)_{\infty}}{}_{2}\phi_{2}\left(\begin{array}{c}
         a,c/b\\
         c,az
    \end{array};q,bz\right).
\end{equation}

\begin{theorem}
    \begin{multline*}
        {}_{3}\phi_{2}\left(
        \begin{array}{c}
             aq,bq,q^{k+1}\\ 
             cq^{k+1},q^2
        \end{array};
        q,z
        \right)\\
        =-\frac{b(1-cq^k)(az;q)_{\infty}}{(1-a)(1-b)(z;q)_{\infty}}\sum_{i=0}^{k}\qbinom{k}{i}_{q}q^{2\binom{i}{2}}\frac{(1-aq^i)(1-cq^i/b)(1-q^{i+1})(a;q)_{i}(c;q)_{i}}{(1-azq^i)(1-cq^{k+i})(az;q)_{i}(cq^k;q)_{i}}(qz)^i\\
        \hspace{1cm}\times{}_{4}\phi_{4}\left(
        \begin{array}{c}
             aq^{i+1},cq^{i+1}/b,q^{i+2},q^{k+1} \\
             azq^{i+1},cq^{k+i+1},q^2,q^{i+1}
        \end{array};
        q,q^{i+1}bz
        \right).
    \end{multline*}
\end{theorem}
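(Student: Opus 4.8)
The plan is to apply the $q$-differential operator $D_{c,q}^{k}$, taken with respect to the parameter $c$, to both sides of Jackson's transformation \eqref{eqn_jackson}, and then to simplify each side using the identities Eqs. \eqref{eqn_iden1}--\eqref{eqn_iden6} and the $q$-derivative formulas of Section~2.

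On the left of \eqref{eqn_jackson} the only $c$-dependence in the $n$-th term of ${}_{2}\phi_{1}(a,b;c;q,z)$ is the factor $1/(c;q)_{n}$, and Theorem~\ref{theo_der_poch_reci} (applied with $1$ in place of its $a$ and $c$ in place of its $x$) gives $D_{c,q}^{k}\{1/(c;q)_{n}\}=(q^{n};q)_{k}/(c;q)_{n+k}$ termwise. Since $(q^{0};q)_{k}=0$, I shift the summation index by one, split every $q$-shifted factorial with Eq. \eqref{eqn_iden2} (for instance $(a;q)_{n+1}=(1-a)(aq;q)_{n}$, $(c;q)_{n+k+1}=(c;q)_{k+1}(cq^{k+1};q)_{n}$, $(q^{n+1};q)_{k}=(q;q)_{k}(q^{k+1};q)_{n}/(q;q)_{n}$), and obtain
\[
D_{c,q}^{k}\Big\{{}_{2}\phi_{1}\big(a,b;c;q,z\big)\Big\}=\frac{(1-a)(1-b)(q;q)_{k}\,z}{(1-q)(c;q)_{k+1}}\;{}_{3}\phi_{2}\!\left(\begin{array}{c}aq,bq,q^{k+1}\\ cq^{k+1},q^{2}\end{array};q,z\right),
\]
which is just the $u=1$, $r=2$, $s=1$ instance of the theorem on $D_{c_{1},q}^{k}\{{}_{r}\Phi_{s}\}$ proved above and so may be quoted directly.

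On the right of \eqref{eqn_jackson} the factor $(az;q)_{\infty}/(z;q)_{\infty}$ is free of $c$ and passes outside $D_{c,q}^{k}$. Writing ${}_{2}\phi_{2}(a,c/b;c,az;q,bz)=\sum_{n\geq0}\frac{(a;q)_{n}}{(q;q)_{n}(az;q)_{n}}(-1)^{n}q^{\binom{n}{2}}(bz)^{n}\cdot\frac{(c/b;q)_{n}}{(c;q)_{n}}$, the residual $c$-dependence sits in the ratio $(c/b;q)_{n}/(c;q)_{n}$, to which I apply Theorem~\ref{theo_der_poch/poch} with $1/b$, $1$, $c$ in place of its $a$, $b$, $x$. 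After cancelling the common $(q;q)_{n}$ and contracting $(c;q)_{n}(cq^{n};q)_{k}=(c;q)_{n+k}$, I interchange the $n$-summation with the Leibniz $i$-summation (legitimate since $0<|q|<1$), so $i$ runs over $0,\ldots,k$ and $n$ over $n\geq i$; reindexing $n\mapsto n+i$, splitting each factor with Eqs. \eqref{eqn_iden1}--\eqref{eqn_iden6} and using $\binom{n+i}{2}=\binom{n}{2}+\binom{i}{2}+ni$, I peel the $n$-independent factors --- among them $(a;q)_{i}$, $(c;q)_{i}$, $(az;q)_{i}$, and the power $q^{2\binom{i}{2}}$ --- into an $i$-coefficient and am left with a basic hypergeometric series in $n$. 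A further reindexing of that series by one, together with one more round of Eq. \eqref{eqn_iden2}, brings it to the ${}_{4}\phi_{4}$ of the statement (with argument $q^{i+1}bz$), the factors $(1-aq^{i})$, $(1-cq^{i}/b)$, $(1-q^{i+1})$, $(1-azq^{i})$, $(1-cq^{k+i})$ split off at this step being absorbed into the $i$-coefficient.

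Finally, equating the two computations and dividing by the common prefactor $\dfrac{(1-a)(1-b)(q;q)_{k}\,z}{(1-q)(c;q)_{k+1}}$ --- during which $(c;q)_{k+1}=(c;q)_{k}(1-cq^{k})$ and the splitting $(c;q)_{n+k+i+1}=(c;q)_{k}(cq^{k};q)_{i}(1-cq^{k+i})(cq^{k+i+1};q)_{n}$ recombine to give the global factor $-\dfrac{b(1-cq^{k})(az;q)_{\infty}}{(1-a)(1-b)(z;q)_{\infty}}$ together with the $(cq^{k};q)_{i}$ in the denominator of the $i$-coefficient --- yields the stated identity. The whole difficulty is this right-hand bookkeeping: carrying out the interchange of summations and the two reindexings consistently, and following the chain of $q$-shifted-factorial identities so that every detached factor lands exactly where the statement puts it and the hypergeometric argument comes out as $q^{i+1}bz$; the left-hand side and the convergence of the interchanges are routine.
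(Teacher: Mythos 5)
Your proposal is correct and follows essentially the same route as the paper: apply $D_{c,q}^{k}$ to both sides of Jackson's transformation, evaluate the left side as the ${}_{3}\phi_{2}$ with prefactor $\frac{(1-a)(1-b)(q;q)_{k}z}{(1-q)(c;q)_{k+1}}$, and on the right side differentiate the ratio $(c/b;q)_{n}/(c;q)_{n}$ via the Leibniz-type formula of Theorem \ref{theo_der_poch/poch}, interchange sums, reindex twice, and divide out the common prefactor. The only cosmetic difference is that you justify the left-hand evaluation by quoting the general $D_{c_{1},q}^{k}\{{}_{r}\Phi_{s}\}$ theorem with $u=1$, which the paper uses implicitly.
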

\begin{proof}
On the one hand,
    \begin{align*}
        D_{c,q}^{k}\left\{{}_{2}\phi_{1}\left(
        \begin{array}{c}
             a,b \\
             c
        \end{array};
        q,z
        \right)\right\}
        =\frac{(1-a)(1-b)(q;q)_{k}z}{(c;q)_{k+1}(1-q)}
        {}_{3}\phi_{2}\left(
        \begin{array}{c}
             aq,bq,q^{k+1}\\ 
             cq^{k+1},q^2
        \end{array};
        q,z
        \right).    
    \end{align*}
On the other hand,
    \begin{align*}
        &D_{c,q}^{k}\left\{{}_{2}\phi_{2}\left(
        \begin{array}{c}
             a,c/b \\
             c,az
        \end{array};
        q,bz
        \right)\right\}\\
        &=\sum_{n=0}^{\infty}(-1)^nq^{\binom{n}{2}}\frac{(a;q)_{n}}{(az;q)_{n}}(bz)^nD_{c,q}^{k}\left\{\frac{(c/b;q)_{n}}{(c;q)_{n}}\right\}\\
        &=\sum_{n=0}^{\infty}(-1)^nq^{\binom{n}{2}}\frac{(a;q)_{n}}{(az;q)_{n}}(bz)^n\frac{(c/b;q)_{n}(q;q)_{n}}{(c;q)_{n}(cq^n;q)_{k}}\sum_{i=0}^{k}\qbinom{k}{i}_{q}q^{\binom{i}{2}}(-1/b)^i\frac{(c;q)_{i}(q^{n};q)_{k-i}}{(q;q)_{n-i}(c/b;q)_{i}}\\
        &=\sum_{i=0}^{k}\qbinom{k}{i}_{q}q^{\binom{i}{2}}(-1/b)^i\frac{(c;q)_{i}}{(c/b;q)_{i}}\sum_{n=i}^{\infty}(-1)^nq^{\binom{n}{2}}\frac{(a;q)_{n}(c/b;q)_{n}(q;q)_{n}(q^{n};q)_{k-i}}{(az;q)_{n}(c;q)_{n}(cq^n;q)_{k}(q;q)_{n-i}}(bz)^n.
    \end{align*}
From Identities Eq.(\ref{eqn_iden2}) and Eq.(\ref{eqn_iden3}) and simplifying it, we have
    \begin{align*}
        &D_{c,q}^{k}\left\{{}_{2}\phi_{2}\left(
        \begin{array}{c}
             a,c/b \\
             c,az
        \end{array};
        q,bz
        \right)\right\}\\
        &=\frac{1}{(c;q)_{k}}\sum_{i=0}^{k}\qbinom{k}{i}_{q}q^{2\binom{i}{2}}\frac{(a;q)_{i}(c;q)_{i}(q;q)_{i}}{(az;q)_{i}(cq^k;q)_{i}}z^i\\
        &\hspace{3cm}\times\sum_{n=0}^{\infty}(-1)^{n}q^{\binom{n}{2}}\frac{(aq^{i};q)_{n}(cq^i/b;q)_{n}(q^{i+1};q)_{n}(q^{n};q)_{k}}{(azq^i;q)_{n}(q;q)_{n}(cq^{k+i};q)_{n}(q^n;q)_{i}}(q^ibz)^{n}.
    \end{align*}
As $(q^0;q)_{n}=0$, then using Eq.(\ref{eqn_iden2}) we have
    \begin{align*}
        &D_{c,q}^{k}\left\{{}_{2}\phi_{2}\left(
        \begin{array}{c}
             a,c/b \\
             c,az
        \end{array};
        q,bz
        \right)\right\}\\
        &=\frac{1}{(c;q)_{k}}\sum_{i=0}^{k}\qbinom{k}{i}_{q}q^{2\binom{i}{2}}\frac{(a;q)_{i}(c;q)_{i}(q;q)_{i}}{(az;q)_{i}(cq^k;q)_{i}}z^i\\
        &\hspace{1cm}\times\sum_{n=0}^{\infty}(-1)^{n+1}q^{\binom{n+1}{2}}\frac{(aq^{i};q)_{n+1}(cq^i/b;q)_{n+1}(q^{i+1};q)_{n+1}(q^{n+1};q)_{k}}{(azq^i;q)_{n+1}(q;q)_{n+1}(cq^{k+i};q)_{n+1}(q^{n+1};q)_{i}}(q^ibz)^{n+1}\\
        &=-\frac{bz(q;q)_{k}}{(1-q)(c;q)_{k}}\sum_{i=0}^{k}\qbinom{k}{i}_{q}q^{2\binom{i}{2}}\frac{(1-aq^i)(1-cq^i/b)(1-q^{i+1})(a;q)_{i}(c;q)_{i}}{(1-azq^i)(1-cq^{k+i})(az;q)_{i}(cq^k;q)_{i}}(qz)^i\\
        &\hspace{1cm}\times\sum_{n=0}^{\infty}(-1)^{n}q^{\binom{n}{2}}\frac{(aq^{i+1};q)_{n}(cq^{i+1}/b;q)_{n}(q^{i+2};q)_{n}(q^{k+1};q)_{n}}{(azq^{i+1};q)_{n}(q^2;q)_{n}(cq^{k+i+1};q)_{n}(q^{i+1};q)_{n}}(q^{i+1}bz)^{n}\\
        &=-\frac{bz(q;q)_{k}}{(1-q)(c;q)_{k}}\sum_{i=0}^{k}\qbinom{k}{i}_{q}q^{2\binom{i}{2}}\frac{(1-aq^i)(1-cq^i/b)(1-q^{i+1})(a;q)_{i}(c;q)_{i}}{(1-azq^i)(1-cq^{k+i})(az;q)_{i}(cq^k;q)_{i}}(qz)^i\\
        &\hspace{1cm}\times{}_{4}\phi_{4}\left(
        \begin{array}{c}
             aq^{i+1},cq^{i+1}/b,q^{i+2},q^{k+1} \\
             azq^{i+1},cq^{k+i+1},q^2,q^{i+1}
        \end{array};
        q,q^{i+1}bz
        \right).
    \end{align*}
\end{proof}

\end{document}